\def\.{\cdot}
\def\d{{\delta}}
\def\la{\langle}
\def\ra{\rangle}
\def\O{\mathrm{O}}
\def\l{\lambda}
\def\beq{\begin{equation}}
\def\eeq{\end{equation}}
\def\bea{\begin{eqnarray*}}
\def\eea{\end{eqnarray*}}
\def\beaa{\begin{eqnarray}}
\def\eeaa{\end{eqnarray}}
\def\ba{\begin{array}}
\def\ea{\end{array}}
\def\L{\Lambda}
\def \RM{\mathbb{R}}
\def \CM{\mathbb{C}}
\def\i{\mathrm{i}}
\def\rk{\mathrm{rk}}
\def\id{\mathrm{id}}
\def\be{\begin{equation}}
\def\ee{\end{equation}}
\def\tr{\mathrm{tr}}
\def\rk{\mathrm{rk}}
\def\Sym{\mathrm{Sym}}
\def\SU{\mathrm{SU}}
\def\E{\mathrm{E}}
\def\G{\mathrm{G}}
\def\OO{\mathcal{O}}
\def\End{\mathrm{End}}
\def\Spin{\mathrm{Spin}}
\def\Sym{\mathrm{Sym}}
\def\Id{\mathrm{id}}
\def\T{\mathrm{\,T}}
\def\S{\mathrm{S}}
\def\lr{\,\lrcorner\,}
\def\dd{\mathrm{d}}
\def\LL{\mathrm{L}}
\newtheorem{epr}{Proposition}[section]
\newtheorem{ath}[epr]{Theorem}
\newtheorem{elem}[epr]{Lemma}
\newtheorem{ecor}[epr]{Corollary}
\theoremstyle{definition}
\newtheorem{ede}[epr]{Definition}
\newtheorem{exe}[epr]{Example}
\title[Generalized vector cross products and  Killing forms]{Generalized vector cross products and Killing forms on negatively curved manifolds}
\address{Maria Laura Barberis, FAMAF-CIEM, Universidad Nacional de C\'ordoba, Ciudad Universitaria, 5000 C\'ordoba, Argentina}
\email{barberis@famaf.unc.edu.ar}
\author{Mar\'\i a Laura Barberis, Andrei Moroianu, Uwe Semmelmann}
\address{Andrei Moroianu \\ Laboratoire de Math\'ematiques d'Orsay, Univ. Paris-Sud, CNRS, 
Universit\'e Paris-Saclay, 91405 Orsay, France }
\email{andrei.moroianu@math.cnrs.fr}
\address{Uwe Semmelmann\\
Institut f\"ur Geometrie und Topologie \\
Fachbereich Mathematik\\
Universit{\"a}t Stuttgart\\
Pfaffenwaldring 57 \\
70569 Stuttgart, Germany
}
\email{uwe.semmelmann@mathematik.uni-stuttgart.de}
\date{\today}
\begin{document}

\begin{abstract}
Motivated by the study of Killing forms on compact Riemannian manifolds of negative sectional curvature, we introduce the notion of generalized vector cross products on $\mathbb{R}^n$ and give their classification. Using previous results about Killing tensors on negatively curved manifolds and a new characterization of $\mathrm{SU}(3)$-structures in dimension $6$ whose associated $3$-form is Killing, we then show that every Killing $3$-form on a compact $n$-dimensional Riemannian manifold with negative sectional curvature vanishes if $n\ge 4$.

\end{abstract}
\subjclass[2010]{53C21,15A69}
\keywords{Killing tensors, Killing forms, vector cross products, $\mathrm{SU}(3)$-structures} 

\maketitle

\section{Introduction}

The notion of multi-linear $p$-fold vector cross products on has been introduced by Gray and Brown \cite{gray1} who classified them over arbitrary fields of characteristic different from 2. Their relations to special types of geometries were further studied by Gray \cite{gray2}. In the present article we are mainly concerned with $2$-fold vector cross products over the real numbers and with a natural generalization of them coming from the theory of Killing tensors. 

Roughly speaking, a vector cross product on the Euclidean space $\RM^n$ is a $3$-form $\tau\in\Lambda^3\RM^n$ with the property that $X\lr\tau$ is a Hermitian 2-form on $X^\perp$ for every unit vector $X\in S^{n-1}$. Using the classification of division algebras it is easy to show that vector cross products only exist for $n=3$ and $n=7$.

In Definition \ref{gvcp} below, we introduce the notion of a {\em generalized vector cross product} which is a (non-vanishing) $3$-form $\tau\in\Lambda^3\RM^n$ with the property that $X\lr\tau$ belongs to some fixed $\O(n)$-orbit in $\Lambda^2\RM^n$ for every unit vector $X\in S^{n-1}$. 

We classify generalized vector cross products in Theorem \ref{th} below. When $n$ is odd, every generalized vector cross product is up to rescaling a vector cross product, and thus $n=3$ or $n=7$. If $n=4k+2$, generalized vector cross products can only exist for $n=6$, where they correspond to 3-forms with $\SU(3)$ stabilizer. For $n=4k$, generalized vector cross products do not exist.

In the second part of the paper we study the problem which motivated the notion above: Killing forms on $\SU(3)$-manifolds and on compact Riemannian manifolds of negative sectional curvature. 

Recall that Killing $p$-forms on Riemannian manifolds are differential forms of degree $p$ with totally skew-symmetric covariant derivative. Parallel forms are of course Killing, but there are many examples of Riemannian manifolds carrying non-parallel Killing forms. Let us quickly review the state of the art.

Killing $1$-forms are just the metric duals to Killing vector fields. There are also several interesting
examples of Killing forms of higher degree, typically related to special geometric structures. On the standard sphere the space of Killing $p$-forms coincides with the 
eigenspace of the Hodge-Laplace operator on coclosed $p$-forms for the smallest eigenvalue.
The fundamental $2$-form of a nearly K\"ahler manifold or the defining $3$-form of a nearly
parallel $\G_2$ manifold are Killing forms by definition. Killing forms also exist on Sasakian and
$3$-Sasakian manifolds. Moreover, the torsion $3$-form of a metric connection with parallel
and totally skew-symmetric torsion is Killing. We refer to \cite{uwe} for further details on Killing forms.

There are also obstructions and non-existence results about Killing forms under certain assumptions. Compactness plus some holonomy reduction often forces Killing forms to be parallel, e.g. on K\"ahler manifolds \cite{yamaguchi}, on quaternion-K\"ahler manifolds \cite{au-qk}, on locally symmetric Riemannian spaces other than spheres \cite{fau-sym}, or on manifolds with holonomy $\G_2$ or $\Spin_7$ \cite{uwe2}. The only known obstruction related to curvature holds on compact 
manifolds of constant negative sectional curvature, where a Weitzenb\"ock formula shows that every Killing forms has to vanish (cf. \cite{uwe}, Prop. 2.4). However, until now no obstruction was known in the case of non-constant negative sectional curvature.

The situation is rather different for Killing tensors (which are defined as symmetric tensors for which the complete symmetrization 
of the covariant derivative vanishes). Indeed, Dairbekov and Sharafutdinov proved in \cite{ds} that all {\em trace-free} Killing tensors on compact manifolds with negative sectional curvature vanish (cf. \cite{aku}, Prop. 6.6 for a more conceptual proof). 

Using this fact, we show in Proposition \ref{p1} below that if $(M,g)$ is a Riemannian manifold with negative sectional curvature, then the only Killing tensors  on $M$ are, up to constant rescaling, the symmetric powers of the Riemannian metric tensor $g$, and then obtain the following results concerning Killing forms on $(M,g)$:
\begin{itemize} 
\item Every non-zero Killing 2-form is parallel and defines, after constant rescaling, a Kähler structure on $(M,g)$ (Proposition \ref{52});
\item If the dimension $n$ of $M$ is different from $3$, every Killing 3-form on $M$ vanishes;  if $n=3$, every Killing 3-form on $M$ is parallel (Theorem \ref{54}).
\end{itemize}

The proof of Theorem \ref{54} relies on the above classification of generalized vector cross products (Theorem \ref{th}), and on a result of independent interest (Proposition \ref{3k}) where we show that the $3$-form corresponding to a $\SU(3)$-structure on a (not necessarily compact) 6-dimensional manifold is Killing if and only if the $\SU(3)$-structure is parallel (i.e. $M$ is non-compact Calabi-Yau), or if it is defined by a strict nearly Kähler structure on $M$.

{\sc Acknowledgment.} This work was initiated in C\'ordoba, Argentina, and completed during a ``Research in Pairs'' stay at the Mathematisches Forschungsinstitut, Oberwolfach, Germany. We thank the MFO for the excellent research conditions provided.

\section{Generalized vector cross products}

Let $\{e_i\}$ denote the standard base of the Euclidean space $(\RM^n,\la\cdot,\cdot\ra)$. We identify vectors and co-vectors using the Euclidean scalar product. 

\begin{ede} A {\em vector cross product} on $(\RM^n,\la\cdot,\cdot\ra)$  is an element $\tau\in\Lambda^3\RM^n$ such that 
\be\label{evcp}|\tau_XY|^2=|X\wedge Y|^2,\qquad\forall\ X,Y\in \RM^n\ ,\ee
where for $X\in\RM^n$, $\tau_X$ denotes the skew-symmetric endomorphism of $\RM^n$ defined by 
$$\la\tau_XY,Z\ra:=\tau(X,Y,Z),\qquad\forall\ Y,Z\in \RM^n\ .$$
\end{ede}

The above condition \eqref{evcp} is equivalent to the fact that for every unit vector $X\in \RM^n$, the skew-symmetric endomorphism $\tau_X$ restricted to $X^\perp$ is orthogonal. Therefore, if $\tau$ is a vector cross product on $\RM^n$, $\tau_X$ is a complex structure on $X^\perp$ for every unit vector $X$, so in particular $n$ has to be odd. Actually much more can be said:
\begin{epr}[cf. \cite{gray2}]\label{vcp}
Let $\tau$ be a vector cross product on $\RM^n$. Then either $n=3$, and $\tau$ is the volume form, or $n=7$ and $\tau$ belongs to the $\O(7)$-orbit of the $3$-form
\be\label{tau0}\tau_0:=e_{127}+e_{347}+e_{567}+e_{135}-e_{146}-e_{236}-e_{245}\ ,
\ee
whose stabilizer in $\O(7)$ is the exceptional group $\G_2$ (in order to simplify the notation, here and in the sequel $e_{ijk}$ stands for $e_i\wedge e_j\wedge e_k$).
\end{epr}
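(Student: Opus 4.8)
The plan is to establish the dichotomy ($n=3$ with $\tau$ the volume form, or $n=7$ with $\tau$ in the $\G_2$-orbit of $\tau_0$) by exploiting the pointwise algebraic structure encoded in the defining identity \eqref{evcp}. The starting observation, already noted in the excerpt, is that for a unit vector $X$ the endomorphism $\tau_X$ restricts to an orthogonal skew-symmetric map on $X^\perp$, hence a complex structure $J_X:=\tau_X|_{X^\perp}$ on the $(n-1)$-dimensional space $X^\perp$; in particular $n-1$ is even, so $n$ is odd. The heart of the argument is to show $n\le 7$, and for this I would polarize \eqref{evcp}. Writing out $|\tau_XY|^2+|\tau_YX|^2=|X\wedge Y|^2$ after replacing $X$ by $X+Z$ gives a bilinear identity; combined with skew-symmetry $\tau_XY=-\tau_YX$ and the Jacobi-type consequences of $\tau$ being a $3$-form, one derives the fundamental relation
\be
\tau_X(\tau_XY)=-|X|^2Y+\la X,Y\ra X,\qquad \forall X,Y\in\RM^n,
\ee
i.e. $\tau_X^2=-|X|^2\Id+X\otimes X$ on all of $\RM^n$. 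Equivalently, on the algebra $\RM\oplus\RM^n$ with product $(a,X)(b,Y):=(ab-\la X,Y\ra,\,aY+bX+\tau_XY)$ one gets a normed (composition) algebra structure, the multiplicativity $|uv|=|u||v|$ being exactly \eqref{evcp} plus the Euclidean norm on the $\RM$-summand.

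Once this composition-algebra structure is in place, I would invoke the Hurwitz theorem: a finite-dimensional real composition algebra with unit has dimension $1,2,4$ or $8$, so $\dim(\RM\oplus\RM^n)\in\{1,2,4,8\}$, forcing $n\in\{0,1,3,7\}$; discarding the degenerate cases where $\tau=0$ leaves $n=3$ or $n=7$. (This is precisely the route that shows vector cross products correspond to the imaginary parts of $\CM$, $\HM$, $\Ca$; for $n=3$ the algebra is $\HM$ and for $n=7$ it is the octonions.) In the case $n=3$, a skew-symmetric $\tau_X$ on a $2$-dimensional complement that squares to $-\Id$ determines $\tau$ uniquely up to sign from any orthonormal frame, and one checks directly that $\tau$ must be $\pm e_{123}$, the volume form. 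In the case $n=7$, one must show that the stabilizer of $\tau$ in $\O(7)$ acts transitively on such forms with a single orbit; concretely, pick a unit vector $e_7$, use $J_{e_7}=\tau_{e_7}|_{e_7^\perp}$ to fix a compatible complex structure on $\RM^6=e_7^\perp$, then successively normalize a unitary frame so that $\tau$ takes the stated form $\tau_0$, and identify the stabilizer as $\G_2$ by its known description as $\{g\in\SO(7):g^*\tau_0=\tau_0\}$.

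**The main obstacle** I anticipate is the $n=7$ normal-form step: deriving $\tau_0$ up to $\O(7)$-action is not a one-line computation, because one has to organize the octonionic multiplication table (equivalently, a Fano-plane labelling of the $e_i$) and verify that all choices made along the way are related by orthogonal transformations preserving the relevant structure, and then separately argue that the resulting stabilizer has dimension $14$ and is connected, hence $\G_2$. A cleaner route, which I would actually follow, is to cite the Gray--Brown classification directly — this is exactly what the statement does (``cf. \cite{gray2}'') — so that the ``proof'' reduces to the two genuinely needed inputs: (i) the reduction $n\in\{3,7\}$ via the composition-algebra argument above, and (ii) the remark that in each dimension the defining pointwise condition is equivalent to $\tau$ being the associative $3$-form of an octonionic (resp. quaternionic) structure, whence the classification of such structures — volume form in dimension $3$, single $\G_2$-orbit in dimension $7$ — is the classical one. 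In other words, I would present the $n$-restriction in detail and then appeal to \cite{gray1,gray2} for the identification of the orbits and the computation of the stabilizer.
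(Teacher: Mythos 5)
The paper does not actually prove Proposition \ref{vcp}; it is quoted from Brown--Gray \cite{gray1} and Gray \cite{gray2}, so there is no internal proof to compare yours against step by step. Your proposal is correct and follows the classical route: the identity $\tau_X^2=-|X|^2\Id+X\otimes X$ does hold (note it needs no polarization in $X$ at all --- since $\tau_X$ is skew, \eqref{evcp} reads $-\la\tau_X^2Y,Y\ra=\la(|X|^2\Id-X\otimes X)Y,Y\ra$ for all $Y$, and a symmetric endomorphism is determined by its quadratic form); the product $(a,X)(b,Y)=(ab-\la X,Y\ra,\,aY+bX+\tau_XY)$ on $\RM\oplus\RM^n$ is indeed norm-multiplicative because $\tau_XY\perp X,Y$, so Hurwitz forces $n+1\in\{1,2,4,8\}$, and discarding the dimensions where $\Lambda^3\RM^n=0$ leaves $n\in\{3,7\}$; the $n=3$ case is immediate and the $n=7$ normal form plus the identification of the stabilizer as $\G_2$ is exactly what \cite{gray1,gray2} supply. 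Two small blemishes: the displayed intermediate identity $|\tau_XY|^2+|\tau_YX|^2=|X\wedge Y|^2$ is off by a factor of $2$ (the two summands are equal, each being $|X\wedge Y|^2$), though this does not affect the conclusion you draw from it; and you should say explicitly that $n=0,1$ are excluded because no nonzero $3$-form exists there (the parity argument via $J_X$ already rules out $n$ even). Since the paper itself merely cites the classification, your choice to prove the dimension restriction in full and defer the orbit/stabilizer computation to the same references is, if anything, more detailed than what the paper provides.
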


We now introduce the following notion, whose motivation will become clear in the next sections of this work. For any skew-symmetric endomorphism $A\in\End^-(\RM^n)$ we denote by $\OO_A\subset \End^-(\RM^n)$ the orbit of $A$  under the adjoint action of the orthogonal group $\O(n)$. 

\begin{ede} \label{gvcp} 
A {\em generalized vector cross product} on the Euclidean space $(\RM^n,\la\cdot,\cdot\ra)$ is an element $\tau\in \Lambda^3\RM^n$ such that there exists a non-vanishing skew-symmetric endomorphism $A\in \End^-(\RM^n)$ with the property that
\be\label{egvcp}\tau_X\in\OO_A,\qquad\forall\ X\in S^{n-1}\subset \RM^n\ .\ee
\end{ede}

Equivalently, $\tau$ is a generalized vector cross product if it is non-zero and if for every unit vectors $X,Y\in\S^{n-1}$, the symmetric endomorphisms $\tau_X^2$ and $\tau_Y^2$ have the same eigenvalues with the same multiplicities.

If $\tau$ is a generalized vector cross product on $\RM^n$, then $\lambda\tau$ is a generalized vector cross product on $\RM^n$ for every $\lambda\in\RM\setminus\{0\}$. The endomorphism $A$, defined up to orthogonal conjugation, is called the {\em associated endomorphism} of $\tau$.

\begin{exe}\label{exa0} Let $A_0\in\End^-(\RM^{2m+1})$ be defined by 
\be\label{a0}A_0(e_{2m+1})=0, \qquad A_0(e_{2i-1})=e_{2i},\qquad A_0(e_{2i})=-e_{2i-1},\qquad \forall \ i\in\{1,\ldots,m\}\ .\ee
Then $\tau\in\Lambda^3\RM^{2m+1}$ is a vector cross product if and only if $\tau$ is a generalized vector cross product with associated endomorphism (conjugate to) $A_0$. Indeed, $B\in \OO_{A_0}$ if and only if $B$ has a one-dimensional kernel and its restriction to $\ker(B)^\perp$ is a complex structure. As $X\in \ker(\tau_X)$ for every $X$, \eqref{evcp} is equivalent to $\tau_X\in \OO_{A_0}$ for every unit vector $X$.
\end{exe}

We now give an important example of generalized vector cross product on $\RM^6$, induced by the standard $\SU(3)$-structure:

\begin{exe}\label{su3} Consider the usual Hermitian structure $J$ on $\RM^6$ with fundamental 2-form $e_{12}+e_{34}+e_{56}\in\Lambda^2\RM^6$, and let 
\be\label{sigma}\sigma_0:=\mathrm{Re}((e_1+ie_2)\wedge(e_3+ie_4)\wedge(e_5+ie_6))=e_{135}-e_{146}-e_{236}-e_{245}\in \Lambda^3\RM^6\ee
be the real part of the complex volume form of $\Lambda^{3,0}\RM^6$. Then $(\sigma_0)_X$ is a Hermitian structure on the orthogonal complement $\mathrm{span}(X,JX)^\perp$ for every unit vector $X$. Indeed, since $\SU(3)$ acts transitively on $S^5\subset \RM^6$ and preserves $J$ and $\sigma_0$, it is enough to check this for $X=e_1$, where indeed the skew-symmetric endomorphism $A$ corresponding to $e_1\lr \sigma_0=e_{35}-e_{46}$ is a Hermitian structure on $\mathrm{span}(e_1,Je_1)^\perp$. By Definition \ref{gvcp}, $\sigma_0$ is a generalized vector cross product on the Euclidean space $\RM^6$ with associated endomorphism $A$.
\end{exe}

The next result, together with Proposition \ref{vcp}, gives the classification of generalized vector cross products in all dimensions:

\begin{ath}\label{th} $(i)$ If $n=2m+1$ is odd, every generalized vector cross product on $\RM^n$ is, up to constant rescaling, a vector cross product, and thus $n=3$ or $n=7$.

$(ii)$ If $n=4k+2$ and $\sigma$ is a generalized vector cross product on $\RM^{n}$ then $n=6$ and, up to constant rescaling, $\sigma$ belongs to the $\O(6)$-orbit of $\sigma_0$ constructed in Example \ref{su3}. 

$(iii)$ There is no generalized vector cross product in dimension $n=4k$.
\end{ath}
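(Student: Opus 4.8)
The plan is to analyze the structure of a generalized vector cross product $\tau$ via its associated skew-symmetric endomorphism $A$ and a dimension-counting / linear-algebra argument, splitting into the three parity cases. Fix $\tau$ a generalized vector cross product on $\RM^n$ with associated endomorphism $A$. Since the nonzero eigenvalues of $\tau_X^2$ (which are $\le 0$) are the same for all unit $X$, write them as $-\lambda_1^2,\dots,-\lambda_r^2$ with multiplicities $2d_1,\dots,2d_r$ (the multiplicity of each nonzero eigenvalue of a skew endomorphism is even), plus a kernel of fixed dimension $k_0 = n - 2(d_1+\cdots+d_r)$. The key observation is that $X \in \ker(\tau_X)$ always, so $k_0 \ge 1$; and more importantly one should exploit the relation obtained by polarizing, namely that for all $X,Y$ one has $\tau_X Y = -\tau_Y X$, together with the ``algebraic'' identity governing $\tau_X^2$.

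\emph{Step 1 (odd dimensions).} When $n=2m+1$, I would show the kernel of $\tau_X$ is exactly one-dimensional for generic $X$, hence (by the eigenvalue-constancy) for all $X$, and that $\tau_X^2 = -\lambda^2 \,\mathrm{pr}_{X^\perp}$ for a single constant $\lambda$. Rescaling by $1/\lambda$ makes $\tau_X$ restrict to a complex structure on $X^\perp$, i.e. $\tau$ becomes a vector cross product; then Proposition \ref{vcp} forces $n=3$ or $n=7$. The point to nail down is that no extra eigenvalues can appear: if $\tau_X$ had a kernel of dimension $\ge 3$ for all $X$, one gets a nonzero $3$-form all of whose contractions have large kernel, and a counting argument (or looking at $\sum_i \tau_{e_i}^2$, whose trace is $-2|\tau|^2$, against the orthogonal decomposition) rules this out; similarly several distinct nonzero eigenvalues are excluded because the spectrum must be $\O(n)$-conjugation invariant while $X \mapsto \tau_X$ varies continuously and $S^{n-1}$ is connected — the eigenspace distributions would have to be globally defined on $S^{n-1}$, which is topologically obstructed unless there is just one.

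\emph{Step 2 (dimension $4k+2$).} Here $n$ is even, so $\tau_X$ may be invertible. First treat the case $\ker \tau_X = 0$ for all $X$: then $X \mapsto \tau_X X = 0$ is automatic, but invertibility of $\tau_X$ together with $\tau_X^2$ having constant spectrum should, after showing $\tau_X^2 = -\lambda^2 \id$, make $\tau_X/\lambda$ a complex structure for every unit $X$ — this is the situation of a ``near-Kähler-like'' algebraic structure and one shows it forces $n=6$ with $\tau$ in the $\O(6)$-orbit of $\sigma_0$ (recognizing the $\SU(3)$ $3$-form), essentially because $\End^-(\RM^n)$ cannot be swept out by complex structures of the form $\tau_X$ parametrized by $S^{n-1}$ unless the dimension count $\dim S^{n-1} \le \dim\{\text{complex structures}\}$ works out, and invariant-theoretically the only possibility with the right stabilizer is $\SU(3)$. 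If instead $\ker \tau_X \ne 0$, its dimension is even (since $n$ is even), so $\dim\ker\tau_X \ge 2$; combined with $X \in \ker\tau_X$ one finds a distribution $X^\perp \cap \ker\tau_X \ne 0$ on $S^{n-1}$, and a parallelizability/Euler-class obstruction together with the classification of vector cross products (applied to a summand) should force $\dim\ker\tau_X = 1$ — contradicting evenness. So only the invertible case survives, giving $n=6$ and $\sigma_0$.

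\emph{Step 3 (dimension $4k$).} Now $n \equiv 0 \pmod 4$. The kernel of $\tau_X$ has dimension $\equiv n \equiv 0 \pmod 2$; since $X$ lies in it, either $\dim\ker\tau_X \ge 2$ for all $X$, or $\tau_X$ is invertible for some (hence, by constancy of the kernel dimension, all) $X$. If $\tau_X$ is invertible for all unit $X$, then $\tau_X X = 0$ is impossible — contradiction, so this case cannot occur. Hence $\dim\ker\tau_X \ge 2$ for all $X$, giving a nowhere-zero line field inside $\ker\tau_X \cap X^\perp$ over $S^{n-1}$; but for $n-1$ odd this is fine topologically, so the obstruction must come from elsewhere. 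The cleaner route: reduce to Step 1 by restricting. Choosing $X$ and $Y=$ a unit vector in $\ker\tau_X\cap X^\perp$, one analyzes $\tau$ on $\mathrm{span}(X,Y)^\perp$, an $\RM^{4k-2}$, and shows the induced object is (a rescaling of) a generalized vector cross product there; by Step 2 this is impossible unless $4k-2 = 6$, i.e. $k=2$, $n=8$, which must then be excluded by a direct argument (e.g. in $\RM^8$ the only candidate would relate to a $\SU(3)$-structure on a hyperplane, which cannot have $\tau_X$ of constant type for all $X\in S^7$). Pushing this through eliminates all $n=4k$.

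\emph{Main obstacle.} The hardest part is Step 2's invertible case: proving that an algebraic $3$-form on $\RM^n$ all of whose unit contractions are (after a single global rescaling) complex structures must live in dimension $6$ with $\SU(3)$ stabilizer. This is where I expect to need genuine representation theory / invariant theory — analyzing the $\O(n)$-orbit structure of $\Lambda^3\RM^n$ and matching the constraint ``$\tau_X^2 = -\id$ for all unit $X$'' (equivalently $\sum_{i,j} \tau_{X}(\tau_X e_i, e_j)^2$ identities) against the known normal forms — rather than soft topological counting. The parity bookkeeping (Steps 1 and 3) and the topological obstructions for line fields on spheres are comparatively routine once the invertible rigidity statement is in hand.
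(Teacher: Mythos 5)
Your Step 2 contains a fatal miscasting of the case analysis. Since $\tau_X X=0$ for every $X$ (this is forced by skew-symmetry of the $3$-form), the endomorphism $\tau_X$ is \emph{never} invertible, so the ``invertible case'' that you identify as the one that survives and produces $n=6$ is empty. In dimension $n=4k+2$ the kernel of $\sigma_X$ has even dimension and contains $X$, hence has dimension at least $2$; your non-invertible branch, which concludes $\dim\ker\tau_X=1$ and derives a contradiction, would in particular rule out $\sigma_0$ on $\RM^6$, which does exist (there $\ker(\sigma_0)_X=\mathrm{span}(X,JX)$ is $2$-dimensional). The correct route is: the eigenspace distribution $E_\mu(\sigma_X^2)\subset X^\perp$ defines a sub-bundle of $\T S^{4k+1}$, and Steenrod's theorem on sub-bundles of $\T S^{4k+1}$ (ranks $0,1,4k,4k+1$ only) forces the single nonzero eigenvalue to have multiplicity $4k$, so the kernel is exactly $2$-dimensional. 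The genuinely hard part --- entirely absent from your sketch --- is then to produce the second kernel vector $F(X)$ (via $*\bigl(X\wedge\sigma_X^{\wedge 2k}\bigr)$ up to normalization), prove that $F$ is a \emph{linear} orthogonal complex structure (this requires a delicate polarization argument on $\sigma_X^2=-|X|^2\Id+X\otimes X+F(X)\otimes F(X)$), and then observe that $e_{n+1}\wedge F+\sigma$ is a genuine vector cross product on $\RM^{n+1}$, so that Proposition \ref{vcp} gives $n+1=7$. Your proposed alternative of ``sweeping out complex structures by $S^{n-1}$'' plus invariant theory does not engage with the actual structure and, as noted, starts from a hypothesis that cannot hold.

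Step 3 also has a gap: the restriction of $\sigma$ to $\mathrm{span}(X,Y)^\perp$ is not obviously a generalized vector cross product there (the contractions $Z\lr\sigma$ need not preserve that subspace, and constancy of the conjugacy class is not inherited), so the proposed reduction to dimension $4k-2$ is unjustified, and the residual case $n=8$ is left unproved. The paper's argument for (iii) is quite different: it shows that $\ker(\sigma_U)=\ker(\sigma_X)$ for all nonzero $U\in\ker(\sigma_X)$, that $\dim\ker(\sigma_X)$ is divisible by $4$ (via a continuity argument on the volume form $\sigma_X^{\wedge(2k-d)}$ along a path from $X$ to $-X$ inside the kernel), and then derives commutation relations among the maps $\sigma_V\circ\sigma_Y^{-1}$ which, combined with $\dim\ker(\sigma_X)\ge 4$, yield a nonzero kernel vector inside $(\ker\sigma_X)^\perp$ --- a contradiction. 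Your Step 1 is essentially the paper's argument (eigenspace sub-bundles of $\T S^{2m}$ are obstructed by the Euler class), though you leave the exclusion of several distinct eigenvalues at the level of a gesture; that part does go through. As it stands, however, parts (ii) and (iii) are not proved.
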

\begin{proof} (i) Let $n=2m+1$ and let $\tau\in \Lambda^3\RM^n$ be a generalized vector cross product with associated endomorphism $A\in \End^-(\RM^n)$. Denote by $\mu<0$ one of the eigenvalues of the symmetric endomorphism $A^2$. The corresponding eigenspace $E_\mu$ is orthogonal to $\ker(A)$. By \eqref{egvcp}, we see that for every $X\in \S^{n-1}$, the $\mu$-eigenspace $E_\mu(\tau_X^2)$ of $\tau_X^2$ is orthogonal to $\ker(\tau_X)$. Since obviously $X\in\ker(\tau_X)$, we get $E_\mu(\tau_X^2)\subset X^\perp=\T_XS^{n-1}$  for every $X\in S^{n-1}$.
Moreover, the dimension of $E_\mu(\tau_X^2)$ does not depend on $X$, so the collection $\{E_\mu(\tau_X^2)\}$ defines a sub-bundle of $\T S^{n-1}$. 

However, it is well known that even dimensional spheres do not have any proper sub-bundles of their tangent bundles. Indeed, if $\T S^{2m}=T_1\oplus T_2$ with $\rk(T_1),\rk (T_2)<2m$, then the Euler class $e(\T S^{2m})=e(T_1)\cup e(T_2)$ and the Euler classes of $T_1$ and $T_2$ vanish (as they belong to a cohomology group which is zero), contradicting the fact that the Euler class of $\T S^{2m}$ is non-zero.

Thus $E_\mu$ has to be of dimension $2m$. Denoting by $\lambda:=\sqrt{-\mu}$, we see that $\frac1\lambda A$ belongs to the orbit $\OO_{A_0}$, where $A_0$ is defined by \eqref{a0}, so $\frac1\lambda\tau$ is a vector cross product by Example \ref{exa0}. 

(ii) Suppose now that $n=4k+2$. By Definition \ref{gvcp}, $\sigma\ne 0$, so $n\ge 6$. Let $A$ be the associated endomorphism of $\sigma$ and let $\mu<0$ be one of the eigenvalues of the symmetric endomorphism $A^2$. The corresponding eigenspace $E_\mu$ has even dimension $d$ and defines a rank $d$ sub-bundle of $\T S^{4k+1}$ as before. 

On the other hand, by Thm. 27.18 in \cite{steenrod}, every sub-bundle of $\T S^{4k+1}$ has rank $0$, $1$, $4k$, or $4k+1$. Since $d$ is even and non-zero, the only possibility is $d=4k$, and therefore $A^2$ has only one non-zero eigenvalue. After rescaling $\sigma$ (and $A$), we can assume that $\mu=-1$, so for every unit vector $X$, $\sigma_X$ has exactly two eigenvalues: $0$ with multiplicity 2, and $-1$ with multiplicity $4k$. 

Let us denote by $*:\Lambda^p\RM^{n}\to\Lambda^{n-p}\RM^{n}$ the Hodge operator induced by the standard metric and orientation. For every $X\in \RM^{n}$ we view $\sigma_X$ as a 2-form in $\Lambda^2\RM^{n}$ and define
$$\psi_X:=\frac{1}{(2k)!}(\sigma_X)^{\wedge 2k},\qquad \text{where}\qquad(\sigma_X)^{\wedge 2k}:=\underbrace{\sigma_X\wedge\ldots\wedge\sigma_X}_{2k \text{ times}}\in\Lambda^{4k}\RM^n\ .$$
By the above considerations we see that $\psi_X$ is exactly the volume element of the eigenspace $E_{-1}$ of $\sigma_X^2$ for every unit vector $X$.

We now consider the map $F:\RM^{n}\to \RM^n$ defined by
$$F(X):=\begin{cases}\tfrac1{|X|^{2k}}*(X\wedge\psi_X)& \text{if } X\ne 0\\ 0& \text{if } X= 0\end{cases}\ ,$$
which is clearly continuous on $\RM^n$ and smooth on $\RM^n\setminus\{0\}$.

By construction, $F(X)$ is orthogonal to $X$ and to $E_{-1}=\mathrm{Im}(\sigma_X)=(\ker(\sigma_X))^\perp$ for every $X$, whence
\be\label{taufx}\sigma_XF(X)=0,\qquad\forall\ X\in\RM^n\ .
\ee
By construction we also have $|F(X)|=1$ if $|X|=1$ and
\be\label{ftx}F(tX)=tF(X),\qquad \forall\ t\in \RM,\ \forall X\in\RM^n\ ,\ee 
showing that $|F(X)|=|X|$ for every $X\in\RM^n$. 

The above properties of $F$ imply that $\{X,F(X)\}$ is an orthonormal basis of $\ker(\sigma_X)$ for every unit vector $X$, whence 
\be\label{t2}\sigma_X^2=-\Id+X\otimes X+F(X)\otimes F(X),\qquad\forall \ X\in S^{n-1}\ ,
\ee
where by convention, for vectors $U,V\in\RM^n$, $U\otimes V$ denotes the endomorphism of $\RM^n$ defined by $X\mapsto \la X,V\ra U.$ Using the homogeneity of $F$,  \eqref{t2} yields
\be\label{t21}\sigma_X^2=-|X|^2\Id+X\otimes X+F(X)\otimes F(X),\qquad\forall \ X\in \RM^n\ .
\ee

For every unit vector $X$ we have $\sigma_{F(X)}X=-\sigma_X F(X)=0$, showing that $X\in\ker(\sigma_{F(X)})=\mathrm{span}(F(X),F(F(X)))$, and as $X\perp F(X)$, we get \be\label{ff}F(F(X))=\pm X\ .\ee 
By \eqref{ftx}, this holds for every $X\in \RM^n$. 

Our aim is to show that $F$ is linear. In order to do this, we polarize \eqref{t21} and obtain for every $X,Y\in\RM^n$:
\bea\sigma_X\circ\sigma_Y+\sigma_Y\circ\sigma_X&=&F(X+Y)\otimes F(X+Y)-F(X)\otimes F(X)-F(Y)\otimes F(Y)\\&&-2\la X,Y\ra\Id+X\otimes Y+Y\otimes X\ .
\eea
This shows that the map $H:\RM^n\times\RM^n\to\RM^n\otimes \RM^n\simeq\End(\RM^n)$ defined by
\be\label{h}H(X,Y):=F(X+Y)\otimes F(X+Y)-F(X)\otimes F(X)-F(Y)\otimes F(Y)\ee
is bilinear, so in particular $H(tX,Y)=tH(X,Y)$ for every $t\in\RM$ and $X,Y\in\RM^n$. 

We now fix $X,Y\in\RM^n\setminus\{0\}$ and compute:
\bea H(X,Y)&=&\lim_{t\to 0}\frac1t{H(tX,Y)}\\&=&\lim_{t\to 0}\frac1t\left({F(tX+Y)\otimes F(tX+Y)-t^2F(X)\otimes F(X)-F(Y)\otimes F(Y)}\right)\\
&=&\lim_{t\to 0}\frac1t\left({[F(tX+Y)-F(Y)]\otimes F(tX+Y)+F(Y)\otimes[F(tX+Y)- F(Y)]}\right)\\
&=&\dd F_Y(X)\otimes F(Y)+F(Y)\otimes \dd F_Y(X)\ ,
\eea
where the differential $\dd F_Y$ of $F$ at $Y$ is well-defined as $Y\ne 0$. In particular we get:
\be\label{zz}\la H(X,Y),Z\otimes Z\ra=0,\qquad \forall \ Z\in F(Y)^\perp\ ,
\ee
where the scalar product here is the canonical extension of the Euclidean scalar product to $\RM^n\otimes \RM^n$ defined by $\la U\otimes V,A\otimes B\ra:=\la U,A\ra\la V,B\ra$.

Using \eqref{h} we get for every $Z$ orthogonal to $F(X)$ and $F(Y)$:
$$0=\la H(X,Y),Z\otimes Z\ra=\la F(X+Y),Z\ra^2\ ,$$
whence 
\be\label{fxy}F(X+Y)\in\mathrm{span}(F(X),F(Y)),\qquad\forall\ X,Y\in\RM^n\ee
(this is tautologically true when $X=0$ or $Y=0$). 

We now define the open set 
$$\mathcal{U}:=\{(X,Y)\in\RM^n\times\RM^n\ |\ X\wedge Y\ne 0\}\ .$$

The set $\mathcal{U}$ is obviously dense and path connected (recall that $n\ge 6$). By \eqref{ff}, for every $(X,Y)\in \mathcal{U}$, $F(X)$ is not collinear to $F(Y)$, so by \eqref{fxy} there exist uniquely defined maps $a,b:\mathcal{U}\to\RM$ such that 
\be\label{fxy1}F(X+Y)=a(X,Y)F(X)+b(X,Y)F(Y),\qquad\forall\ (X,Y)\in \mathcal{U}\ .\ee

For every $(X,Y)\in \mathcal{U}$ there exists $Z\in F(Y)^\perp$ such that $\la F(X),Z\ra\ne 0$. Using \eqref{h} and \eqref{zz} we obtain:
$$0=\la H(X,Y),Z\otimes Z\ra=\la F(X+Y),Z\ra^2-\la F(X),Z\ra^2=(a(X,Y)^2-1)\la F(X),Z\ra^2\ ,$$
showing that the image of $a$ is contained in $\{\pm1\}$. By symmetry, the same holds for $b$. 

We now notice that $a$ and $b$ are continuous functions on $\mathcal{U}$. Indeed, taking the scalar product in \eqref{fxy1} with the vector $Z(X,Y):=F(X)-\frac{\la F(X),F(Y)\ra F(Y)}{|F(Y)|^2}$ (which is orthogonal to $F(Y)$ and continuous on $\mathcal{U}$) yields
$$\la F(X+Y),Z(X,Y)\ra=a(X,Y)\frac{|F(X)|^2|F(Y)|^2-\la F(X),F(Y)\ra^2}{|F(Y)|^2}\ ,$$
whence 
$$a(X,Y)=\frac{\la F(X+Y),Z(X,Y)\ra|F(Y)|^2}{|F(X)|^2|F(Y)|^2-\la F(X),F(Y)\ra^2}$$
is well-defined and continuous by the Cauchy-Schwarz inequality. A similar argument shows that $b$ is continuous too, and since $\mathcal{U}$ is connected, $a$ and $b$ are constant on $\mathcal{U}$. 

Remark that if $(X,Y)\in \mathcal{U}$, then $(X+Y,-Y)\in \mathcal{U}$. Applying \eqref{fxy1} twice and using \eqref{ftx} we get
$$F(X)=F((X+Y)+(-Y))=aF(X+Y)+bF(-Y)=a^2F(X)+abF(Y)-bF(Y)\ .$$
Since $a^2=1$ and $F(Y)\ne 0$, we get $ab=b$, whence $a=1$ and similarly $b=1$. We thus have $F(X+Y)=F(X)+F(Y)$ for all $(X,Y)\in \mathcal{U}$, so using the density of $\mathcal{U}$ and the continuity of $F$, together with \eqref{ftx} we obtain that $F$ is linear. Moreover, it was noticed above that $F$ is norm-preserving, and $F(X)\perp X$ for every $X$, so $F$ is a skew-symmetric orthogonal map, i.e. a Hermitian structure on $\RM^n$.

We consider now the standard embedding $\RM^n\subset\RM^{n+1}\simeq\RM^n\oplus \RM e_{n+1}$ and define $\tau\in\Lambda^3\RM^{n+1}$ by
$$\tau:=e_{n+1}\wedge F+\sigma$$
($F$ is viewed here as an element of $\Lambda^2\RM^n\subset\Lambda^2\RM^{n+1}$). We claim that $\tau$ is a vector cross product on $\RM^{n+1}$. To see this, let $\tilde X:=X+ae_{n+1}$ and $\tilde Y:=Y+be_{n+1}$ be two arbitrary vectors in $\RM^{n+1}$ (with $a,b\in \RM$ and $X,Y\in \RM^n$).
We compute 
\be\label{ti} |\tilde X\wedge\tilde Y|^2=|X\wedge Y+ae_{n+1}\wedge Y-be_{n+1}\wedge X|^2=|X\wedge Y|^2+|aY-bX|^2
\ee
and
$$\tau_{\tilde X}\tilde Y=(aF-e_{n+1}\wedge F(X)+\sigma_X)(\tilde Y)=aF(Y)-bF(X)+e_{n+1}F(X,Y)+\sigma_XY\ .$$
Using \eqref{t21} we get
\bea|\sigma_XY|^2&=&-\la\sigma_X^2Y,Y\ra=-\la-|X|^2Y+X\la X,Y\ra+F(X)F(X,Y),Y\ra\\&=&|X|^2|Y|^2-\la X,Y\ra^2-F(X,Y)^2\ .\eea
Moreover, $\sigma_XY$ is orthogonal to both $F(X)$ and $F(Y)$ by \eqref{taufx}. Using the last two relations together with the fact that $F$ is orthogonal we thus compute
\bea |\tau_{\tilde X}\tilde Y|^2&=&|aF(Y)-bF(X)|^2+F(X,Y)^2+|\sigma_XY|^2\\
&=&|aF(Y)-bF(X)|^2+|X|^2|Y|^2-\la X,Y\ra^2\\
&=&|aY-bX|^2+|X\wedge Y|^2\ ,
\eea
which, together with \eqref{ti}, shows that $\tau$ satisfies the definition of vector cross products \eqref{evcp}.

By Proposition \ref{vcp} we obtain that $n+1=7$, and there exists $\alpha\in \O(7)$ such that $\tau=\alpha\tau_0$, where $\tau_0$ was defined in \eqref{tau0}. Then 
$$\sigma=\tau-e_{7}\wedge(e_{7}\lr\tau)=\alpha(\tau_0-\xi\wedge(\xi\lr\tau_0)),\qquad\text{where}\qquad\xi:=\alpha^{-1}(e_7)\ .$$ 

As the group $\G_2$ acts transitively on $S^6$ and fixes $\tau_0$, there exists $\beta\in\G_2\subset \O(7)$ such that $\beta(e_7)=\xi$, whence
$$\sigma=\alpha(\tau_0-\xi\wedge(\xi\lr\tau_0))=\alpha\beta(\tau_0-e_7\wedge(e_7\lr\tau_0))\ .$$
Moreover $\alpha\beta(e_7)=e_7$ so $\alpha\beta\in\O(6)$ and the above relation shows that $\sigma$ is in the $\O(6)$-orbit of the $3$-form $\tau_0-e_7\wedge(e_7\lr\tau_0)$ which by \eqref{tau0} is equal to $\sigma_0$ defined in \eqref{sigma}. This finishes the proof of part (ii) in the theorem.

(iii) Let $n=4k$ and assume that $\sigma\in\Lambda^3\RM^{n}$ is a generalized vector cross product with associated endomorphism $A$. By definition, $\sigma_X$ is conjugate to $A$ for every unit vector $X$, and since $\sigma_X(X)=0$, $A$ is non-invertible. Moreover, $A$ is skew-symmetric, so its kernel has even dimension $2d$. 

\begin{elem} \label{le1}
For every non-zero $U\in\ker(\sigma_X)$ one has $\ker(\sigma_U)=\ker(\sigma_X)$. 
\end{elem}

\begin{proof} 
For every non-zero vector $X$, $\ker(\sigma_X)$ has dimension $2d$, so the $(4k-2d)$--form $(\sigma_X)^{\wedge (2k-d)}$ is non-zero, and the
$(4k-2d+2)$--form $(\sigma_X)^{\wedge (2k-d+1)}$ vanishes (here we view $\sigma_X$ as a 2-form rather than as a skew-symetric endomorphism). Replacing $X$ by $X+tW$ and considering the coefficient of $t$ in the formula 
\beq\label{sx}0=(\sigma_X+t\sigma_W)^{\wedge (2k-d+1)}\eeq
we obtain 
$$0=(\sigma_X)^{\wedge (2k-d)}\wedge\sigma_W,\qquad\forall\ X,W\in\RM^{4k}\ .$$
Let $X$ be a unit vector, and $U,V\in\ker(\sigma_X)$. Taking the interior product with $U$ and then with $V$ in the above formula yields
$$0=(\sigma_X)^{\wedge (2k-d)}\sigma_W(U,V),\qquad\forall \ W\in\RM^{4k}\ ,$$
whence $\sigma_U(V)=0$ for all $U,V\in\ker(\sigma_X)$. This shows that $\ker(\sigma_X)\subset \ker(\sigma_U)$ for all $U\in\ker(\sigma_X)$, and since $\dim(\ker(\sigma_X))=\dim(\ker(\sigma_U))$ for every $U$ non-zero, the lemma is proved.

\end{proof}

\begin{elem} \label{le2}
The dimension $2d$ of $\ker(\sigma_X)$ is a multiple of $4$ for every unit vector $X$.
\end{elem}
\begin{proof} For every unit vector $X$, the wedge product $\omega_X:=(\sigma_X)^{\wedge (2k-d)}$ is a volume form of $(\ker(\sigma_X))^\perp$, whose norm is independent of $X$. Let $X_t$ be a continuous path in $\ker(\sigma_X)$ with $X_0=X$,  $X_1=-X$, and $|X_t|=1$ for every $t\in[0,1]$. By Lemma \ref{le1}, $\omega_{X_t}$ is a volume form of constant length on $(\ker(\sigma_X))^\perp$, so by continuity it is constant. We thus have $\omega_X=\omega_{-X}$, whence $d$ is even.

\end{proof}

Let $X$ be a non-zero vector in $\RM^n$ and consider the $(n-2d)$-dimensional Euclidean space $E_X:=(\ker(\sigma_X))^\perp$ with the scalar product induced from $\RM^n$. By Lemma \ref{le1}, $\sigma_Y$ defines an automorphism of $E_X$ for every $Y\in \ker(\sigma_X)$.

\begin{elem} \label{le3}
For every non-zero vector $X\in \RM^n$ and for every $Y,Z\in\ker(\sigma_X)$, the following relation holds in $\End(E_X)$
\beq\label{com}\sigma_Y\circ(\sigma_X)^{-1}\circ\sigma_Z=\sigma_Z\circ(\sigma_X)^{-1}\circ\sigma_Y\ ,\eeq
where $(\sigma_X)^{-1}$ denotes the inverse of the automorphism $\sigma_X$ of $E_X$.
\end{elem}

\begin{proof} The annulation of the coefficient of $t^2$ in \eqref{sx} yields 
$$0=(\sigma_X)^{\wedge (2k-d-1)}\wedge\sigma_W\wedge\sigma_W,\qquad\forall\ X,W\in\RM^{4k}\ .$$
Taking the interior product in this relation with two vectors $Y$ and $Z$ from $\ker(\sigma_X)$, and using the fact that by Lemma \ref{le1} $\sigma_Y(Z)=0$, we obtain
\beq\label{ex}0=(\sigma_X)^{\wedge (2k-d-1)}\wedge\sigma_W(Y)\wedge\sigma_W(Z)=(\sigma_X)^{\wedge (2k-d-1)}\wedge\sigma_Y(W)\wedge\sigma_Z(W),\qquad\forall\ W\in\RM^{4k}\ .\eeq
The exterior form $(\sigma_X)^{\wedge (2k-d)}$ defines an orientation of $E_X$. With respect to this orientation, the Hodge dual of $(\sigma_X)^{\wedge (2k-d-1)}$ is proportional to the $2$-form of $E_X$ corresponding to the skew-symmetric endomorphism $(\sigma_X)^{-1}$. The above equation \eqref{ex} thus implies
$$0=\la \sigma_Y(W)\wedge\sigma_Z(W),(\sigma_X)^{-1}\ra=\la \sigma_Z(W),(\sigma_X)^{-1}(\sigma_Y(W))\ra=-\la W,\sigma_Z\circ(\sigma_X)^{-1}\circ\sigma_Y(W)\ra$$
for every $W\in\E_X$. This shows that the endomorphism $\sigma_Z\circ(\sigma_X)^{-1}\circ\sigma_Y$ of $E_X$ is skew-symmetric, which is equivalent to the statement of the lemma.

\end{proof}

\begin{ecor} The relation 
\beq\label{com1}\sigma_Y\circ(\sigma_U)^{-1}\circ\sigma_Z=\sigma_Z\circ(\sigma_U)^{-1}\circ\sigma_Y\eeq
holds for every $Y,Z,U\in \ker(\sigma_X)$. 
\end{ecor}
\begin{proof} Since by Lemma \ref{le1} one has $E_X=E_U$ for every non-zero vector $U\in\ker(\sigma_X)$, one can replace $X$ by $U$ in \eqref{com}.

\end{proof}

Let $Y,Z,U,V$ be non-zero vectors in $\ker(\sigma_X)$. By \eqref{com1} we have 
$$\sigma_Y\circ(\sigma_U)^{-1}\circ\sigma_Z=\sigma_Z\circ(\sigma_U)^{-1}\circ\sigma_Y\qquad\text{and}\qquad\sigma_V\circ(\sigma_U)^{-1}\circ\sigma_Z=\sigma_Z\circ(\sigma_U)^{-1}\circ\sigma_V\ .$$
Composing on the right with the inverse of $\sigma_Y$ in the first equation and with the inverse of $\sigma_V$ in the second one we obtain
$$(\sigma_Y)^{-1}\circ\sigma_Z\circ(\sigma_U)^{-1}\circ\sigma_Y=(\sigma_V)^{-1}\circ\sigma_Z\circ(\sigma_U)^{-1}\circ\sigma_V\ ,$$
and finally, composing on the left with $\sigma_V$ and on the right with $(\sigma_Y)^{-1}$ yields 
$$\sigma_V\circ(\sigma_Y)^{-1}\circ\sigma_Z\circ(\sigma_U)^{-1}=\sigma_Z\circ(\sigma_U)^{-1}\circ\sigma_V\circ(\sigma_Y)^{-1}\ .$$
This shows that for every non-zero vectors $Y,Z,U,V\in\ker(\sigma_X)$, the automorphisms $f_{VY}:=\sigma_V\circ(\sigma_Y)^{-1}$ and $f_{ZU}:=\sigma_Z\circ(\sigma_U)^{-1}$ of $E_X$ commute. Consequently, they have a common eigenvector, say $\xi$, in $E_X^\CM$. Denoting by $F\subset E_X$ the subspace generated by the real and imaginary parts of $\xi$ (of real dimension 1 or 2), we deduce that $f_{YZ}(F)=F$ for every non-zero vectors $Y,Z\in\ker(\sigma_X)$, which by the definition of $f_{YZ}$ is equivalent to $\sigma_Z^{-1}(F)=\sigma_Y^{-1}(F)$. This just means that the subspace $G:=(\sigma_Y)^{-1}(F)$ is independent on the non-zero vector $Y\in\ker(\sigma_X)$, i.e. $\sigma_Y$ is an isomorphism from $G$ to $F$ for every non-zero vector $Y\in\ker(\sigma_X)$. 

Let us fix $W\in G\setminus\{0\}$. By Lemma \ref{le2}, the dimension of $\ker(\sigma_X)$ is $2d\ge 4$, so the linear map 
$$\ker(\sigma_X)\to F,\qquad Y\mapsto \sigma_Y(W)$$
is non-injective. It follows that there exists a non-zero vector $Y\in\ker(\sigma_X)$ such that $\sigma_Y(W)=0$. By Lemma \ref{le1}, $W\in \ker(\sigma_Y)=\ker(\sigma_X)$. This contradicts the fact that $W$ is a non-zero element in $G\subset E_X=(\ker(\sigma_X))^\perp$, and concludes the proof of the theorem.

\end{proof}

\section{$\SU(3)$-structures and Killing forms}

\subsection{$\SU(3)$-structures}

A $\SU(3)$-structure on a $6$-dimensional Riemannian manifold $(M,g)$ is defined as a structure group
reduction from $\O(6)$ to $\SU(3)$. The reduction defines an almost complex structure $J$ compatible with the 
Riemannian metric $g$, and a $3$-form $\Psi^+$ of type $(3,0)+(0,3)$ such that 
$\frac14\Psi^+ \wedge \ast \Psi^+$ is equal to the Riemannian volume form. In particular, $\Psi^+$ and 
$\Psi^- := \ast \Psi^+$ both have constant length and span the space of forms of type $(3,0) + (0,3)$.

The space
of $J$-anti-invariant $2$-forms, i.e. forms of type $(2,0)+(0,2)$, can be identified with the tangent space $\T M$.
Indeed, any such form can be written as $X \lr \Psi^-$ for some tangent vector $X$.

Differentiating the relation $J^2= - \id$ with respect to the Levi-Civita covariant derivative $\nabla$ of $g$  yields $\nabla_X J \circ J + J \circ \nabla_XJ =0$, so  $\nabla_X J$ is a form of type $(2,0)+(0,2)$ for every $X$. Thus there exists a endomorphism $\alpha$ of the tangent bundle satisfying
\be\label{alpha}\nabla_XJ = \alpha(X) \lr \Psi^-,\qquad\forall \ X\in\T M\ . \ee

Every skew-symmetric endomorphism $A \in \End^- (\T M)$  acts on the exterior bundle by
\be\label{act}
A \cdot \eta := \sum_i e_i \wedge e_i \lr \eta,\qquad\forall\ \eta\in\Lambda^*M\ ,
\ee
where $\{e_i\}$ is some local orthonormal basis. The almost complex structure $J$ acts trivially on $(p,p)$-forms. Moreover, as $\Psi^-_X = - \Psi^+_{JX}$ for every tangent vector $X$, we get from \eqref{act} that $J\cdot \Psi^\pm = \pm 3 \Psi^\mp$. 

The action of the $2$-form $\Psi^-_X$ on the 
$3$-forms $\Psi^\pm$ is given by
\be\label{action}
\Psi^-_X \cdot \Psi^+ \, =\,  - 2 X \wedge \omega
\qquad\mbox{and}\qquad
\Psi^-_X \cdot \Psi^- \,=\,  - 2 JX \wedge \omega
\ee
for all tangent vectors $X$, where $\omega$ denotes the fundamental $2$-form defined by
$\omega(X, Y) := g(JX, Y)$ (cf. \cite{au1}, Lemma 2.1).

\begin{ede}[cf. \cite{gray3}]\label{dnk} A {\em nearly K\"ahler} structure on $(M,g)$ is a Hermitian structure $J$ satisfying $(\nabla_XJ)(X)=0$ for every tangent vector $X$. The structure is called {\em strict} if $\nabla_XJ\ne 0$ for every $X\ne 0$. 
\end{ede}

\begin{epr}[cf. \cite{gray3}, \cite{uwe}]\label{pnk}
Every strict $6$-dimensional nearly Kähler manifold $(M,g,J)$ is Einstein with positive scalar curvature equal to $30\lambda^2$ for some $\lambda\in\RM\setminus\{0\}$. The fundamental $2$-form $\omega$ together with the $3$-forms $\Psi^+:=\frac1{3\lambda}\dd\omega$ and $\Psi^-:=*\Psi^+$ define
 a $\SU(3)$ structure on $M$ which moreover satisfies
 \beaa\label{4}\nabla_X\omega\ &=&\ \lambda X\lr\Psi^+\\
 \label{5}\nabla_X\Psi^-&=&-\tfrac12\lambda X\lr(\omega\wedge\omega)\ .
 \eeaa
\end{epr}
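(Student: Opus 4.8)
The proposition collects classical facts due to Gray, and I would derive them all from the single defining identity $(\nabla_XJ)X=0$. Write $\psi(X,Y,Z):=g((\nabla_XJ)Y,Z)$, so that $\nabla_X\omega=\psi(X,\cdot,\cdot)$. Since $\nabla_XJ$ is a skew-symmetric endomorphism anticommuting with $J$ (differentiate $J^2=-\id$ and the skew-symmetry of $J$), each $\nabla_X\omega$ is a $2$-form of type $(2,0)+(0,2)$; polarising $(\nabla_XJ)X=0$ gives $(\nabla_XJ)Y=-(\nabla_YJ)X$, so $\psi$ is skew in its first two arguments as well, hence a genuine $3$-form; and a $3$-form all of whose contractions $X\lr\psi$ are of type $(2,0)+(0,2)$ is necessarily of type $(3,0)+(0,3)$. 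Since $\nabla$ is torsion-free and $\psi$ is totally skew, $\dd\omega=3\psi$. Moreover, on the $4$-dimensional $J$-invariant subspace $\mathrm{span}(X,JX)^\perp$ the $2$-form $\nabla_XJ$ is $J$-anti-invariant, and a non-zero $J$-anti-invariant $2$-form on a complex $2$-plane is automatically non-degenerate; so by strictness $X\lr\psi$ has rank $4$ for every unit $X$, and consequently $\psi$ is pointwise $\SU(3)$-equivalent, up to scale, to the standard $3$-form $\sigma_0$ of Example~\ref{su3}. This pointwise rigidity is what drives the rest of the argument.

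The analytic heart is the constancy of $|\nabla J|$. For this I would compute the second covariant derivative $\nabla^2J$: combining the Ricci identity for $\nabla^2J$ with the first Bianchi identity and the type constraint above yields Gray's formula, expressing $\nabla^2_{W,X}J$ — equivalently $\nabla\psi$, equivalently $\nabla\Psi^+$ — as an explicit algebraic expression in $g$, $J$ and $\omega$ times a scalar proportional to $|\nabla J|^2$. A further covariant differentiation (or, equivalently, a contraction of Gray's curvature identity expressing $R(W,X,Y,Z)-R(W,X,JY,JZ)$ through $g((\nabla_WJ)X,(\nabla_YJ)Z)$) then forces $|\nabla J|^2$ to be constant — a positive constant, by strictness. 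Define $\lambda>0$ by this constant, normalised so that $\Psi^+:=\tfrac1{3\lambda}\dd\omega=\tfrac1\lambda\psi$ satisfies $|\Psi^+|^2=4$, equivalently $\tfrac14\Psi^+\wedge\ast\Psi^+=\vol$. By the first step $\Psi^+$ then has constant length, is of type $(3,0)+(0,3)$, and is compatible with $J$, so $(g,J,\omega,\Psi^+,\Psi^-:=\ast\Psi^+)$ is an $\SU(3)$-structure.

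The Einstein property follows from the standard contraction of Gray's curvature identities that is special to dimension $6$: it gives $\Ric=\Ric^\ast+c\,g$ with $c$ proportional to $|\nabla J|^2$, together with a second identity forcing $\Ric^\ast$ to be a multiple of $\Ric$, whence $\Ric=5\lambda^2g$ and $\scal=30\lambda^2>0$. The two derivative formulas are then bookkeeping. First, since $\psi=\nabla\omega$ is totally skew, $\nabla_X\omega=X\lr\psi=\lambda\,X\lr\Psi^+$. Second, $\nabla$ commutes with the Hodge operator, so $\nabla_X\Psi^-=\ast(\nabla_X\Psi^+)$; substituting $\nabla_X\Psi^+=\tfrac1\lambda\nabla_X\psi$ with Gray's formula for $\nabla^2J$ from the previous step, and simplifying with the six-dimensional identities $\ast\omega=\tfrac12\,\omega\wedge\omega$ and $X\lr\omega=JX$, yields $\nabla_X\Psi^-=-\tfrac12\lambda\,X\lr(\omega\wedge\omega)$; applying $\dd$ to this consistently recovers the structure equation $\dd\Psi^-=-2\lambda\,\omega\wedge\omega$.

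The step I expect to be the real obstacle is obtaining the precise algebraic form of $\nabla^2J$ and deducing the constancy of $|\nabla J|$: this is where one must balance the Ricci and Bianchi identities against the $(3,0)+(0,3)$ type restriction and exploit the $6$-dimensionality. Once the constant $\lambda$ is in hand, building the $\SU(3)$-structure, deriving the Einstein equation, and verifying the two Leibniz-type identities are all routine.
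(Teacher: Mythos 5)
The paper does not actually prove this proposition: it is stated as a citation of Gray \cite{gray3} and Semmelmann \cite{uwe} and used as a black box in Proposition \ref{3k} and Theorem \ref{54}, so there is no internal proof to compare against. Your outline reconstructs the classical argument, and the steps you actually carry out are correct: polarizing $(\nabla_XJ)X=0$ to get total skew-symmetry of $\psi=\nabla\omega$, the type $(3,0)+(0,3)$ argument, $\dd\omega=3\psi$, the non-degeneracy of $X\lr\psi$ on $\mathrm{span}(X,JX)^\perp$ under strictness, the normalization giving the $\SU(3)$-structure, and the bookkeeping that turns Gray's formula for $\nabla^2J$ into the two displayed derivative identities and $\dd\Psi^-=-2\lambda\,\omega\wedge\omega$ are all as in the literature. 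The one substantive reservation is the one you flag yourself: the two genuinely hard ingredients --- the explicit algebraic expression for $\nabla^2_{W,X}J$ and the constancy of $|\nabla J|^2$, as well as the contraction of Gray's curvature identities yielding $\Ric=5\lambda^2 g$ --- are invoked by name rather than derived, and these are precisely the analytic content for which the proposition cites \cite{gray3}. As a roadmap to a complete proof your proposal is accurate and correctly locates the crux; as a self-contained argument it is incomplete exactly there, i.e.\ at the same point where the paper itself defers to the references.
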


More details on $\SU(3)$-structures and 6-dimensional nearly Kähler manifolds can be found in \cite{au1}. 

\subsection{Killing forms}

Let $(M, g)$ be a Riemannian manifold with Levi-Civita connection $\nabla$. 
A  {\it Killing $p$-form} on $M$ is a $p$-form $\omega$ such that 
$\nabla \omega$ is totally skew-symmetric, or equivalently, satisfying (cf. \cite{uwe}):
\be\label{kf}
\nabla_X\omega = \tfrac{1}{p+1} X \lr \dd \omega,\qquad\forall \ X\in \T M\ .
\ee

\begin{exe}\label{rkf}From Definition \ref{dnk} we see that the fundamental 2-form $\omega$ of an almost Hermitian structure $(g,J)$ is a Killing 2-form if and only if $(g,J)$ is nearly Kähler.
\end{exe}

\begin{exe}\label{rkf1} If $(g,J)$ is nearly Kähler structure on $M$ and $(\omega,\Psi^+,\Psi^-)$ denotes the associated $\SU(3)$-structure,  then \eqref{5} shows that $\Psi^-$ is a Killing 3-form. 
\end{exe}

Conversely, we have the following:

\begin{epr}\label{3k}
Let $(M, g, \omega,\Psi^\pm)$ be a Riemannian manifold with a $\SU(3)$-structure such that $\Psi^-$ is
a Killing $3$-form. Then either $(g,\omega)$ is Kähler and $\Psi^\pm$ are parallel, or $(g,\omega)$ is strict nearly K\"ahler and $\Psi^-$ is a constant multiple of $*\dd\,\omega$ as in Proposition \ref{pnk}.
\end{epr}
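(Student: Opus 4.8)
The plan is to compute $\nabla_X\Psi^-$ from the structure equations of the $\SU(3)$-structure, substitute into the Killing equation \eqref{kf}, and finish with a short $\dd^2=0$ argument.

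\textbf{Step 1: the shape of $\nabla_X\Psi^-$.} The Levi-Civita connection differs from an $\SU(3)$-connection by a $1$-form with values in the orthogonal complement of $\su(3)$ in $\so(6)\cong\Lambda^2$, namely $\RM\omega$ (acting on forms as the derivation associated to $J$) together with the space of $(2,0)+(0,2)$-forms. Using $J\cdot\Psi^-=-3\Psi^+$ and the action \eqref{action} of a $(2,0)+(0,2)$-form $\Psi^-_Z$ on $\Psi^-$, this produces a $1$-form $\mu$ on $M$ and an endomorphism $S$ of $\T M$ with
\[
\nabla_X\Psi^-=-3\,\mu(X)\,\Psi^+-2\,(JSX)\wedge\omega ,\qquad\forall\,X\in\T M ,
\]
the endomorphism $S$ encoding the $(2,0)+(0,2)$-part of the intrinsic torsion, hence determined by $\nabla J$ through \eqref{alpha}. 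I will use freely the relations of Section 3.1, in particular $\Psi^-_X=-\Psi^+_{JX}$ and the identity $\Psi^-(X,JX,\cdot)=0$, valid because $\Psi^-$ has type $(3,0)+(0,3)$ and $X,JX$ span a complex line.

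\textbf{Step 2: using the Killing equation.} For a $3$-form, \eqref{kf} is equivalent to $X\lr\nabla_X\Psi^-=0$ for every $X$. Plugging in the formula of Step~1 and using $X\lr\omega=JX$ turns this into the pointwise identity
\[
3\,\mu(X)\,(X\lr\Psi^+)+2\,(JX)\wedge(JSX)+2\,g(X,JSX)\,\omega=0 ,\qquad\forall\,X .
\]
Decompose with respect to $J$. The $(1,1)$-part reads $(JX)\wedge(JSX)+X\wedge SX+2\,g(X,JSX)\,\omega=0$; contracting it with $\omega$ forces $g(X,JSX)=0$ for all $X$, and then the $(1,1)$-part reduces to $(JX)\wedge(JSX)+X\wedge SX=0$. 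The left-hand side is an $\SU(3)$-equivariant quadratic expression in $S\in\End(\T M)$, so its kernel is a sum of $\SU(3)$-irreducible summands of $\End(\T M)$; testing these one finds that the only possibility is $S=c\,\id$ for some function $c$ on $M$. Substituting back into the $(2,0)+(0,2)$-part gives $\mu(X)(X\lr\Psi^+)=0$, hence $\mu=0$. We are left with
\[
\nabla_X\Psi^-=-2c\,(JX)\wedge\omega ,\qquad \nabla_X\omega=2c\,(X\lr\Psi^+) ,\qquad (\nabla_XJ)(X)=0 ,
\]
the last because $\nabla_XJ$ is a multiple of $\Psi^-_X$ acting on $J$, while $\Psi^-(X,JX,\cdot)=0$. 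In particular $(M,g,J)$ is nearly Kähler (Definition~\ref{dnk}).

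\textbf{Step 3: constancy of $c$ and conclusion.} From Step~2, using $\sum_i e_i\wedge Je_i=2\omega$ and $\sum_i e_i\wedge(e_i\lr\Psi^+)=3\Psi^+$, one computes $\dd\omega=6c\,\Psi^+$ and $\dd\Psi^-=-4c\,\omega\wedge\omega$. Since $\omega\wedge\Psi^+=0$ for type reasons, $0=\dd^2\Psi^-=-4\,\dd c\wedge\omega\wedge\omega$; as wedging with $\omega\wedge\omega$ is injective on $1$-forms (hard Lefschetz for the non-degenerate $2$-form $\omega$), this gives $\dd c=0$, so $c$ is constant on each connected component of $M$. If $c=0$, then $\nabla\Psi^-=0$, whence $\nabla\Psi^+=-*\nabla\Psi^-=0$ and (since $\nabla\omega=0$) $\nabla J=0$: thus $(g,\omega)$ is Kähler and $\Psi^\pm$ are parallel. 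If $c\neq0$, then $\nabla_XJ\neq0$ for all $X\neq0$ (because $X\lr\Psi^-\neq0$ and $J$ is invertible), so the nearly Kähler structure is strict, and $\Psi^-=*\Psi^+=\tfrac1{6c}*\dd\omega$ is a constant multiple of $*\dd\omega$, precisely the structure of Proposition~\ref{pnk} with $\lambda=2c$.

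\textbf{The main obstacle.} The only non-mechanical point is the algebraic step in Step~2, namely that the $\SU(3)$-equivariant equation $(JX)\wedge(JSX)+X\wedge SX=0$ has no solution beyond $S\in\RM\,\id$; this is a finite verification over the irreducible $\SU(3)$-summands of $\End(\T M)$. Everything else is bookkeeping with the structure equations of Section~3.1 together with the elementary Hodge-theoretic identities used above.
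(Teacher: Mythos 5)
Your strategy is sound and, where you carry out the computations, they check out: the torsion ansatz $\nabla_X\Psi^-=-3\mu(X)\Psi^+-2(JSX)\wedge\omega$, the contraction with $\omega$ giving $g(X,JSX)=0$, the vanishing of $\mu$, and the identities $\dd\omega=6c\,\Psi^+$, $\dd\Psi^-=-4c\,\omega\wedge\omega$ are all correct. The route is genuinely different from the paper's. The paper never decomposes the intrinsic torsion into irreducibles: it writes the Killing condition as $\nabla_X\Psi^-=\tfrac14 X\lr\dd\Psi^-$ and $\nabla_X\Psi^+=-\tfrac14 X\wedge\delta\Psi^+$, observes that $\delta\Psi^+$ is of type $(1,1)$ and $\dd\Psi^-$ of type $(2,2)$, and then differentiates the two algebraic identities $\pm 3\Psi^\mp=J\cdot\Psi^\pm$. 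Eliminating $\dd\Psi^-$ and $\delta\Psi^+$ between the resulting equations yields the pointwise \emph{linear} identity $3J\alpha(X)+\alpha(JX)=fX$, which is solved by hand ($\alpha(X)=-\tfrac f4 JX$) with no representation theory at all. Your $\dd^2=0$ argument for the constancy of $c$ is also a nice self-contained alternative to the paper's appeal to Gray's Theorem 5.2; what the paper's elimination buys is precisely the avoidance of the algebraic classification you defer to the end.

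That deferred step is the one real weakness. The entire proof hinges on the claim that $(JX)\wedge(JSX)+X\wedge SX=0$ for all $X$ forces $S\in\RM\,\id$, and you assert it rather than prove it. Two remarks. First, the expression is linear in $S$ (quadratic in $X$), and the bare statement ``the kernel is a sum of irreducible summands, test one element of each'' is not quite enough for $\End(\T M)$, since the adjoint representation of $\SU(3)$ occurs there with multiplicity two (once in the symmetric endomorphisms commuting with $J$, once in $\uu(3)$), so a priori the kernel could be a diagonal copy not detected by testing each summand separately. Second, this is easily repaired using the constraint $g(X,JSX)=0$ that you have already derived: it confines $S$ to the multiplicity-free space $\RM\,\id\oplus\{\text{traceless Hermitian-symmetric}\}\oplus\{(2,0)+(0,2)\text{-forms}\}$, and then two explicit evaluations (e.g.\ $S=\mathrm{diag}(1,1,-1,-1,0,0)$ in a unitary frame tested at $X=e_1+e_3$, and $S=\Psi^-_{e_1}$ tested at $X=e_3$) show that neither nontrivial summand meets the kernel. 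With that verification written out, your proof is complete.
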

\proof Let $J$ denote the Hermitian structure determined by $(g,\omega)$.
Since  $\Psi^-$ is assumed to be a Killing form we have 
\be\label{p-}\nabla_X \Psi^- = \tfrac14 X \lr \dd \Psi^-\ ,\qquad\forall\ X\in \T M\ee
and by Hodge duality 
\be\label{p+}\nabla_X \Psi^+ = - \tfrac14 X \wedge \delta \Psi^+ \ ,\qquad\forall\ X\in \T M\ .
\ee

We claim that $J$ acts trivially on $\delta \Psi^+$ and $\dd \Psi^-$:
\be\label{j} J\cdot\delta \Psi^+=0,\qquad J\cdot \dd\Psi^-=0\ .\ee
Indeed the norm of 
$\Psi^+$ is constant and thus taking a covariant derivative and using \eqref{p+} gives for every tangent vector $X$:
$$
0 = g(\nabla_X \Psi^+, \Psi^+) = - \tfrac14 \,  g(X \wedge \delta \Psi^+, \Psi^+)
= - \tfrac14 \, g(\delta \Psi^+, \Psi^+_X) \ .
$$
It follows that $\delta \Psi^+$ is orthogonal to the space of $J$ anti-invariant $2$-forms, so
is a $(1,1)$-form, i.e. $J\cdot\delta \Psi^+=0$. By Hodge duality, $\dd\Psi^-$ is a $(2,2)$-form, so $J\cdot \dd\Psi^-=0$, thus proving our claim.

Taking the covariant derivative of the equation $ 3\,  \Psi^- = J \cdot \Psi^+$, and using \eqref{alpha} and \eqref{action}, together with \eqref{p-}--\eqref{j} yields:
\bea
\tfrac34 \, X \lr \dd \Psi^- &=& \nabla_XJ \cdot \Psi^+ + J \cdot \nabla_X\Psi^+
\;=\;
\Psi^-_{\alpha(X)} \cdot \Psi^+  \,-\, \tfrac14 JX \wedge  \delta \Psi^+ \\[1ex]
&=&\
-\, 2 \, \alpha(X) \wedge \omega \;-\; \tfrac14 \, JX \wedge  \delta \Psi^+ \ ,  
\eea
whence by replacing $X$ with $JX$:
\be\label{two} \tfrac34 \,JX \lr \dd \Psi^- =-\, 2 \, \alpha(JX) \wedge \omega \;+\; \tfrac14 \, X \wedge  \delta \Psi^+\ ,\qquad\forall\ X\in \T M\ .
\ee

Similarly, taking the covariant derivative of the equation $- 3 \Psi^+ = J \cdot \Psi^-$ and using \eqref{alpha} and  \eqref{action}, together with \eqref{p-}--\eqref{j}, we obtain
\be\label{eu}
\tfrac34 \, X \wedge \delta \Psi^+ =
\Psi^-_{\alpha(X)} \cdot \Psi^-  \,+\, \tfrac14 JX \lr \dd\Psi^-
\;=\;
-\, 2\, J\alpha(X) \wedge \omega \,+\, \tfrac14 JX \lr \dd\Psi^- \ .
\ee
Taking a wedge product with $X$ in  \eqref{eu} and multiplying by $3$, we obtain:
\bea
0 &=& -\, 6 \, X \wedge J\alpha(X) \wedge \omega \;+\; \tfrac34 \, X \wedge JX \lr \dd \Psi^- \ ,
\eea
which together with \eqref{two} yields
\bea
0 &=&X \wedge ( -\, 6 \, J\alpha(X)  \;-\; 2\,  \alpha(JX) ) \wedge \omega \ .
\eea

Since the wedge product with  $\omega$ is injective on 2-forms, we 
obtain $$X \wedge ( -\, 6 \, J\alpha(X)  - 2\,  \alpha(JX) ) = 0\ ,\qquad\forall \ X\in\T M\ , $$ 
and thus there exists some function $f$ on $M$ such that 
\be\label{d1}3 \, J \alpha(X) + \alpha(JX) = f X\ ,\qquad\forall\ X\in\T M\ .\ee
Applying $J$ to this equation and replacing $X$ with $JX$ yields 
\be\label{d2}- 3 \alpha(JX) - J\alpha(X) = - f X\ ,\qquad\forall\ X\in\T M\ .\ee
Multiplying \eqref{d1} by 3 and adding to \eqref{d2} finally implies
$\alpha(X) = - \tfrac{f}{4} JX$.
By \eqref{alpha} we thus get 
$$\nabla_XJ = -\tfrac{f}{4}\, \Psi^-_{JX} = -\tfrac{f}{4}\,  \Psi^+_X\ ,\qquad\forall\ X\in\T M\ . $$ 
This shows that $(\nabla_XJ)(X)=0$ for every $X$, so $(M,g,J)$ is nearly K\"ahler. Moreover, the previous equation yields
\be\label{oo}*\dd\omega=-\tfrac{3f}4*\Psi^+=-\tfrac{3f}4\Psi^-\ .
\ee

By Thm. 5.2 in \cite{gray3}, $f$ is constant on $M$. If this constant is non-zero, then $(M,g,J)$ is strict nearly Kähler and $\Psi^-$ is a constant multiple of $*\dd\omega$ by \eqref{oo}. If $f=0$, the endomorphism $\alpha$ vanishes, so $(M,g,J)$ is Kähler by \eqref{alpha}. Moreover, when $\alpha=0$, comparing \eqref{two} and \eqref{eu} immediately gives $\dd\Psi^-=0$ and $\d\Psi^+=0$, so $\Psi^\pm$ are parallel by \eqref{p-}--\eqref{p+}.

\qed

\section{Killing tensors and Killing forms on negatively curved manifolds}

\subsection{Killing tensors}
We will use the formalism introduced in \cite{aku}. For the convenience of the
reader, we recall here the standard definitions and formulas which are relevant in the sequel.

Let $(\T M, g)$ be the tangent bundle of a $n$-dimensional Riemannian manifold $(M,g)$. We denote with
$\Sym^p \T M \subset \T M^{\otimes p}$ the $p$-fold symmetric tensor product of $\T M$. 

Let $\{e_i\}$ denote from now on a local orthonormal frame of $(\T M, g)$. Using the metric $g$, we will identify $\T M$ with $\T^* M$ and thus $\Sym^2 \T^* M\simeq\Sym^2\T M$. Under this identification we view the metric tensor as a symmetric 2-tensor
$\LL := 2 g = \sum_i e_i \cdot e_i$.
The metric adjoint of $\LL\cdot:\Sym^{p} \T M \rightarrow \Sym^{p+2} \T M$ is the bundle homomorphism
$$
\L : \Sym^{p+2}\T M \rightarrow \Sym^{p} \T M , \quad K \mapsto  \sum_i e_i \lr e_i \lr  K\ .
$$

We denote by
$
\Sym^p_0 \T M := \ker( \L|_{\Sym^p \T M})
$
the space of trace-free symmetric $p$-tensors.
On sections of $\Sym^p \T M$ we define a first order differential operator
$$
\dd : \Gamma(\Sym^p \T M) \rightarrow \Gamma( \Sym^{p+1}\T M), \quad K \mapsto \sum_i e_i \cdot \nabla_{e_i}K \ ,
$$
which acts as derivation on symmetric products and commutes with $\LL\cdot$.

A symmetric tensor $K \in \Gamma(\Sym^p \T M)$ is called {\em conformal Killing tensor} if
there exists some symmetric tensor $k \in \Gamma( \Sym^{p-1}\T M)$ with
$\;
\dd  K = \LL  \cdot k
$.
The tensor $K$ is called {\em Killing} if $\dd K = 0$ or equivalently if $(\nabla_X K)(X,\ldots, X)= 0$ holds for all vector fields $X$. A Killing tensor is in particular a
conformal Killing tensor.

In \cite{aku}, Prop. 6.6 we have shown that if $(M^n,g)$ is a compact Riemannian manifold with negative sectional curvature, then every trace-free conformal Killing tensor vanishes (cf. \cite{ds} for the original proof). As a corollary we immediately obtain:

\begin{epr} \label{p1}
 If $(M^n,g)$ is a compact Riemannian manifold with negative sectional curvature, then every Killing tensor is proportional to some power of the metric (and in particular has even degree).
\end{epr}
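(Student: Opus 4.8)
The plan is to decompose an arbitrary Killing tensor $K\in\Gamma(\Sym^p\T M)$ into its trace-free parts and apply the vanishing result for trace-free conformal Killing tensors. First I would recall the standard $\O(n)$-irreducible decomposition of the symmetric power bundle: $\Sym^p\T M=\bigoplus_{j\ge 0}\LL^j\cdot\Sym^{p-2j}_0\T M$, where $\LL=2g$ and each summand with $p-2j\ge 0$ is nonzero (this is the ``spherical harmonics'' decomposition of polynomials on $\RM^n$). Thus I can write $K=\sum_{j} \LL^j\cdot K_j$ with $K_j\in\Gamma(\Sym^{p-2j}_0\T M)$ uniquely determined by $K$.

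Next I would exploit that $\dd$ commutes with multiplication by $\LL$ (stated in the excerpt) and that $\dd$ maps $\Gamma(\Sym^q\T M)$ to $\Gamma(\Sym^{q+1}\T M)$. A priori $\dd K_j$ need not be trace-free, but its trace-free part lies in $\Sym^{q+1}_0$, and one checks by induction on $j$ (peeling off the top trace-free component, which is the one of highest rank) that $\dd K=0$ forces each $K_j$ to be a \emph{conformal} Killing tensor: indeed, projecting the equation $0=\dd K=\sum_j \LL^j\cdot \dd K_j$ onto each isotypic component and using that $\LL\cdot$ is injective on symmetric tensors, one isolates $\dd K_j = \LL\cdot(\text{something in }\Sym^{p-2j-1})$ for every $j$. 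Hence each $K_j$ is a trace-free conformal Killing tensor on the compact negatively curved manifold $(M,g)$, so by \cite{aku}, Prop. 6.6 (cited above), $K_j=0$ for every $j$ with $p-2j>0$. The only surviving term is $K_{p/2}$, which is a trace-free conformal Killing $0$-tensor, i.e. a constant, and this forces $p$ to be even and $K=c\,\LL^{p/2}=c'\,g^{p/2}$ for some constant $c'$.

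The main obstacle is the bookkeeping in the middle step: verifying cleanly that the Killing condition $\dd K=0$ descends to the conformal Killing condition on each trace-free component $K_j$. The subtlety is that $\dd$ does not preserve trace-freeness, so one must carefully commute $\L$ (the metric contraction) past $\dd$ — there is a standard commutator formula $[\L,\dd]=c\,\delta$ or similar, relating the contraction of $\dd K$ to the divergence — and then argue inductively from the top component downward that the ``defect'' of each $\dd K_j$ from being trace-free is exactly a multiple of $\LL$ times a lower symmetric tensor. Once this structural lemma is in place, the conclusion is immediate from the cited vanishing theorem. (Alternatively, and perhaps more cleanly, one may avoid the induction by noting that for any Killing tensor $K$ the top-rank trace-free component $K_0$ of $K$ itself satisfies $\dd K_0 = \LL\cdot(\cdots)$ directly from $\dd K=0$ by taking trace-free projections, hence $K_0=0$; then $K=\LL\cdot K'$ for a Killing tensor $K'$ of lower degree — since $\LL\cdot$ commutes with $\dd$ and is injective — and one finishes by induction on $p$.)
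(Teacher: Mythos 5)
Your proposal is correct, and your parenthetical ``alternative'' at the end is essentially verbatim the paper's proof: induct on the degree $p$, write $K=K_0+\LL\cdot K_1$ with $K_0$ trace-free, observe that $\dd K=0$ forces $\dd K_0=-\LL\cdot\dd K_1$ so that $K_0$ is a trace-free conformal Killing tensor and hence vanishes by \cite[Prop.~6.6]{aku}, then use injectivity of $\LL\cdot$ to conclude that $K_1$ is Killing of degree $p-2$ and invoke the induction hypothesis (the paper handles $p=0$ and $p=1$ separately, the latter via the Bochner argument for Killing fields). Your preferred ``all at once'' route via the full decomposition $K=\sum_j\LL^j\cdot K_j$ also works, but be careful with the order of operations: from $0=\sum_j\LL^j\cdot\dd K_j$ alone you can only isolate the top trace-free component and conclude that $K_0$ is conformal Killing; the equation for the trace-free part of degree $p+1-2j$ involves the $j$-th trace part of $\dd K_0,\dots,\dd K_{j-1}$ as well as $(\dd K_j)_0$, so you must first kill $K_0,\dots,K_{j-1}$ using the vanishing theorem before you can conclude that $K_j$ is conformal Killing. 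This interleaving is exactly what the degree induction accomplishes automatically, which is why the paper's (and your alternative) formulation is cleaner; there is no genuine gap, only this bookkeeping point to make explicit.
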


\begin{proof} Let $K$ be a Killing tensor of degree $p$. We use induction on $p$. If $p=0$ the tensor $K$ is a function satisfying $\dd K=0$, so it is constant. If $p=1$, $K$ is a Killing vector field, so it has to vanish from the Bochner formula. Assume that $K$ is Killing and that $p\ge 2$. 

Using the standard decomposition of symmetric tensors introduced in \cite{aku}, p. 385, we can write $K=K_0+\LL \cdot K_1$, where  $K_0$ is a symmetric trace-free tensor of degree $p$ and $K_1$ is a symmetric tensor of degree $p-2$. Since $K$ is Killing, we get $0=\dd K=\dd K_0+\LL\cdot \dd K_1$, so $K_0$ is a trace-free conformal Killing tensor, which vanishes by \cite[Prop. 6.6]{aku}. Thus $K=\LL\cdot K_1$ and $0=\LL\cdot \dd K_1$, whence $K_1$ is Killing (as the operator $\LL\cdot$ is injective). By the induction hypothesis we thus have $p-2=2k$ is even and $K^1=\lambda g^k$ for some constant $\lambda$. This shows that $K=L\cdot\lambda g^k= 2\lambda g^{k+1}.$

\end{proof}

\subsection{Killing forms on negatively curved manifolds}
As a first application of the above result we have:

\begin{epr}\label{52} If $(M^n,g)$ is a compact Riemannian manifold with negative sectional curvature, then every non-zero Killing $2$-form is parallel, and defines after constant rescaling a Kähler structure on $M$.
\end{epr}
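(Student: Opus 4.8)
The strategy is to manufacture from a non-zero Killing $2$-form a symmetric Killing tensor and then apply Proposition \ref{p1}. Let $\omega$ be a non-zero Killing $2$-form on $M$ and let $A\in\End^-(\T M)$ be the field of skew-symmetric endomorphisms determined by $g(AX,Y)=\omega(X,Y)$. I would introduce the symmetric $2$-tensor $K(X,Y):=g(AX,AY)$ and check that it is a Killing tensor. Since $\omega$ is Killing, \eqref{kf} gives $g((\nabla_XA)Y,Z)=(\nabla_X\omega)(Y,Z)=\tfrac13\,\dd\omega(X,Y,Z)$, which is totally skew-symmetric in $X,Y,Z$; hence $(\nabla_XK)(X,X)=2\,g((\nabla_XA)X,AX)=\tfrac23\,\dd\omega(X,X,AX)=0$ for every vector field $X$, i.e. $\dd K=0$.

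By Proposition \ref{p1}, $K$ is a constant multiple of $g$, say $K=c\,g$; moreover $c\ge 0$ since $K(X,X)=|AX|^2$. If $c=0$ then $A$, and hence $\omega$, vanishes identically, contrary to assumption, so $c>0$. Replacing $\omega$ by $c^{-1/2}\omega$ — again a Killing $2$-form — we may assume $c=1$, so $|AX|=|X|$ for all $X$; an endomorphism which is both skew-symmetric and a pointwise isometry satisfies $A^2=-\Id$, so $J:=A$ is an almost complex structure compatible with $g$ whose fundamental $2$-form is exactly $\omega$. Since this fundamental form is Killing, Example \ref{rkf} shows that $(g,J)$ is a nearly Kähler structure.

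It remains to see that $(g,J)$ is in fact Kähler, i.e. that $\nabla J=0$. Here I would pass to the Riemannian universal cover $(\tilde M,\tilde g,\tilde J)$, which is complete, carries the pulled-back nearly Kähler structure, and still has negative sectional curvature, hence negative Ricci curvature. A manifold with negative sectional curvature is de Rham irreducible (a nontrivial product factor would produce flat mixed $2$-planes), so by the structure theory of nearly Kähler manifolds (Nagy) $\tilde M$ is either Kähler or one of the irreducible strict nearly Kähler models — $6$-dimensional nearly Kähler manifolds, twistor spaces over positive quaternion-Kähler manifolds, or nearly Kähler $3$-symmetric spaces — all of which are Einstein with positive scalar curvature and therefore excluded by $\Ric<0$. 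Hence $\tilde\nabla\tilde J=0$, so $\nabla J=0$ and $\nabla\omega=0$ on $M$; the original Killing $2$-form, being a constant multiple of this parallel $\omega$, is parallel as well, and after rescaling it defines a Kähler structure on $(M,g)$.

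The only genuine computation is the one-line verification that $g(A\cdot,A\cdot)$ is a Killing tensor, which follows at once from the skew-symmetry of $\dd\omega$; the substance lies in the last step, deducing Kähler from nearly Kähler under the curvature hypothesis. One can instead argue by the dimension $n$ of $M$ (which is even, as $M$ carries an almost complex structure): for $n=4$ nearly Kähler implies Kähler by Gray's theorem, and for $n=6$ a strict nearly Kähler structure would, by Proposition \ref{pnk}, force $(M,g)$ to be Einstein with positive scalar curvature, against $\Ric<0$; only for $n\ge 8$ is Nagy's decomposition really needed. The point to watch is that both the compactness (for Proposition \ref{p1}) and the negativity of the sectional curvature are essential, the latter being used twice.
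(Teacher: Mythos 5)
Your proposal is correct and follows essentially the same route as the paper: your tensor $K(X,Y)=g(AX,AY)$ is exactly the paper's Killing tensor $T(X,X)=|X\lr\omega|^2$, Proposition \ref{p1} is applied in the same way to produce a compatible almost complex structure whose fundamental form is $\omega$, and the nearly K\"ahler observation is identical. The only cosmetic difference is in the last step: the paper invokes the Kirichenko--Nagy splitting of the universal cover into K\"ahler and strict nearly K\"ahler factors and kills the strict factor via its positive Ricci curvature, whereas you first use irreducibility (from negative sectional curvature) and then exclude the irreducible strict models as positive Einstein --- same references, same mechanism.
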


\begin{proof}
Assume that $\omega$ is a non-zero Killing 2-form on $M$. Then the symmetric 2-tensor $T$ defined by $T(X,X):=|X\lr\omega|^2$ is a Killing tensor. Indeed, one has 
$$(\nabla_XT)(X,X)=2g(X\lr\nabla_X\omega,X\lr\omega)=0,\qquad\forall\  X\in\T M\ .$$

Proposition \ref{p1} above then shows that there exists some (positive) constant $\l$ such that $|X\lr\omega|^2=\l |X|^2$ for every tangent vector $X$, thus by rescaling, one may assume that $\omega$ is the fundamental 2-form of a Hermitian structure $J$ on $(M,g)$. The fact that $\omega$ is a Killing 2-form just means that $\nabla\omega$ is totally skew-symmetric, i.e. $(M,g,J,\omega)$ is nearly Kähler. 

The universal covering of a compact nearly Kähler manifold $(M,g,\omega)$ is isometric to a product between a Kähler manifold $(M_1,g_1,\omega_1)$ and a strict nearly Kähler manifold $ (M_2,g_2,\omega_2)$ (cf. \cite{kiri} or \cite{nagy1}, Prop. 2.1). Moreover, if $M_2$ is not reduced to a point, its Ricci tensor is positive definite by Thm. 1.1 in \cite{nagy1}. The assumption on $M$ thus shows that $M=M_1$ and $\omega=\omega_1$ is a Kähler form, therefore it is parallel.

\end{proof}

We now turn our attention to Killing 3-forms and start with the following:

\begin{elem}\label{lt} If $\tau$ is a Killing $3$-form on a compact Riemannian manifold $(M^n,g)$ with negative sectional curvature, then $\tau$ has constant norm. If $\tau$ is not identically zero, then for every $p\in M$, $\tau_p$ defines a generalized vector cross product on $(T_p M, g_p)$.
\end{elem}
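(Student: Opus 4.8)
The statement has two parts: first, that a Killing $3$-form $\tau$ on a compact negatively curved manifold has constant norm; second, that at each point $\tau_p$ is a generalized vector cross product. The plan is to deduce both from Proposition \ref{p1}, exactly as in the proof of Proposition \ref{52}, by manufacturing suitable Killing tensors out of $\tau$.

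\smallskip

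\textbf{Step 1: The norm is constant.} The key observation is that $|\tau|^2$ is (up to a constant) a trace of a naturally associated Killing tensor, so it is constant because the only degree-$0$ Killing tensors are constants (Proposition \ref{p1} with $p=0$). Concretely, consider the symmetric $2$-tensor $T$ defined by $T(X,X):=|\tau_X|^2=|X\lr\tau|^2$, where $\tau_X\in\Lambda^2\RM^n$. Using that $\tau$ is Killing, $\nabla_X\tau=\tfrac14 X\lr\dd\tau$, one computes
$$(\nabla_X T)(X,X)=2g(X\lr\nabla_X\tau,X\lr\tau)=\tfrac12 g(X\lr(X\lr\dd\tau),X\lr\tau)=0,$$
since $X\lr(X\lr\eta)=0$ for any form $\eta$. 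Hence $T$ is a Killing $2$-tensor, and by Proposition \ref{p1} it is a constant multiple of $g$; taking traces shows $|\tau|^2=\tfrac12\sum_i T(e_i,e_i)$ is constant. (If one prefers, one can also argue directly that $d|\tau|^2=2\langle\nabla\tau,\tau\rangle$ vanishes because $\nabla\tau$ is a $4$-form while $\tau$ is a $3$-form — but the Killing-tensor argument is the one that fits the paper's method.)

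\smallskip

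\textbf{Step 2: The $\O(n)$-orbit of $\tau_X$ is independent of $X$.} Fix $p\in M$ and suppose $\tau_p\ne 0$; I must show that the skew-symmetric endomorphisms $(\tau_p)_X$, for $X$ ranging over the unit sphere in $T_pM$, all lie in a single $\O(n)$-orbit, i.e. $(\tau_p)_X^2$ has the same eigenvalues with the same multiplicities for all unit $X$. The $\O(n)$-conjugacy class of a skew-symmetric endomorphism $A$ is determined by the values $\tr(A^{2k})$ for $k=1,\dots,n$, so it suffices to show that for each fixed $k$ the function $X\mapsto \tr\big((\tau_p)_X^{2k}\big)$ is constant on the unit sphere. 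The trick is again to build a Killing tensor: for each $k\ge 1$ define the symmetric $2k$-tensor $T^{(k)}$ whose value on a diagonal argument is
$$T^{(k)}(X,\dots,X):=\tr\big((\tau_X)^{2k}\big)=\langle (\tau_X)^{2k-1},\tau_X\rangle\quad(\text{as elements of }\Lambda^2 T_pM),$$
which is a polynomial of degree $2k$ in $X$, hence the diagonal restriction of a well-defined symmetric $2k$-tensor on $M$. Differentiating and using that each factor contributes a term containing $X\lr\nabla_X\tau=\tfrac14 X\lr X\lr\dd\tau=0$ (more precisely, in $\nabla_X\big(\tr(\tau_X^{2k})\big)$ every term is a trace of a product of copies of $\tau_X$ with one copy of $(\nabla_X\tau)_X$, and $(\nabla_X\tau)_X=\tfrac14(X\lr\dd\tau)_X$ vanishes because interior-multiplying the $2$-form $X\lr\dd\tau$ again by $X$ gives zero), one gets $(\nabla_X T^{(k)})(X,\dots,X)=0$. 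Thus $T^{(k)}$ is a Killing $2k$-tensor, so by Proposition \ref{p1} it is a constant multiple of $g^k$; evaluating on a unit vector shows $\tr\big((\tau_p)_X^{2k}\big)$ is the same for all unit vectors $X$ and all points $p$. Combined with the nonvanishing hypothesis $\tau_p\ne 0$ (which by Step 1 holds at every point once it holds at one), this says precisely that $\tau_p$ satisfies Definition \ref{gvcp}, with associated endomorphism $A$ the common conjugacy class of the $(\tau_p)_X$.

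\smallskip

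\textbf{Where the difficulty lies.} The only genuinely delicate point is verifying that each $T^{(k)}$ is Killing — that is, checking $(\nabla_X T^{(k)})(X,\dots,X)=0$. This is a direct computation: by the Leibniz rule $\nabla_X\big(\tr(\tau_X^{2k})\big)$ is a sum of $2k$ terms, the $j$-th being $\tr\big(\tau_X^{j-1}(\nabla_X\tau)_X\,\tau_X^{2k-j}\big)$ (here $(\nabla_X\tau)_X$ denotes the skew endomorphism associated to the $2$-form obtained by contracting the $4$-form $\nabla_X\tau$ with $X$), and the Killing equation $\nabla_X\tau=\tfrac14 X\lr\dd\tau$ forces $(\nabla_X\tau)_X=0$ since $X\lr(X\lr\dd\tau)=0$. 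One should be slightly careful that ``$\tr(\tau_X^{2k})$'' really does extend to a smooth section of $\Sym^{2k}TM$ (it is manifestly a homogeneous degree-$2k$ polynomial function on each tangent space, smooth in the base point, hence is), and that the contraction with $g^k$ in Proposition \ref{p1} is the right normalization — but these are routine. No topological input (unlike in Theorem \ref{th}) is needed here; everything reduces cleanly to the classification of Killing tensors on negatively curved compact manifolds.
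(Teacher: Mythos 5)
Your proof is correct and is essentially the paper's own argument: both the constancy of the norm and the constancy of the $\O(n)$-orbit of $\tau_X$ are obtained by applying Proposition \ref{p1} to the symmetric $2k$-tensors $X\mapsto\tr(\tau_X^{2k})$, whose Killing property follows from $(\nabla_X\tau)_X=\tfrac14\,X\lr(X\lr \dd\tau)=0$, and the common conjugacy class is then identified via Newton's identities exactly as in the paper. One caveat: your parenthetical ``direct'' argument for constant norm is false --- $X(|\tau|^2)=2\langle\nabla_X\tau,\tau\rangle=\tfrac12\langle \dd\tau,\,X\wedge\tau\rangle$ has no reason to vanish (Killing forms on the round sphere, and Killing $1$-forms in general, do not have constant norm), so the Killing-tensor argument is not merely the one that fits the paper's method but is genuinely needed, since constancy of the norm really uses compactness and negative curvature.
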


\begin{proof}
For every vector field $X$ we denote by $\tau_X:=X\lr \tau$ the 2-form on $M$, identified also with a skew-symmetric endomorphism of $\T M$. For every $k\ge 1$ we define the symmetric tensor $T_k$ of degree $2k$ by $T_k(X,\ldots,X)=\tr(\tau_X^{2k})$. We readily compute 
$$(\nabla_XT_k)(X,\ldots,X)=2k\,\tr(\tau_X^{2k-1}\circ(\nabla_X\tau)_X)=0\ ,$$ 
since $(\nabla_X\tau)_X=X\lr (\nabla_X\tau)=0$, as $\tau$ is Killing. 

By Proposition \ref{p1} there exist real constants $a_k$ such that 
\be\label{trace}\tr(\tau_X^{2k})=a_k|X|^{2k},\qquad\forall\ X\in\T M,\ \forall\ k\ge 1\ .\ee

In particular, \eqref{trace} for $k=1$ shows that $\tau$ has constant norm on $M$. Indeed, using a local orthonormal basis $\{e_i\}$ of $\T M$ we compute:
\be\label{cn}|\tau|^2=\sum_ig(\tfrac13e_i\wedge e_i\lr\tau,\tau)=\tfrac13\sum_i|e_i\lr\tau|^2=-\frac13\sum_i\tr(\tau_{e_i}^2)=-\frac n3a_1\. \ee

By Newton's relations, \eqref{trace} shows that the endomorphisms $(\tau_X)_p$ have the same characteristic polynomial for every $p\in M$ and for every unit vector $X\in \T_p M$. Since they are skew-symmetric, they have the same conjugacy class under the action of the isometry group of $(\T_pM,g_p)$. Thus $\tau_p$ is a generalized vector cross product on $(\T_pM,g_p)$ for every $p\in M$. 

\end{proof}

We are now ready to prove the main result of the paper:

\begin{ath}\label{54} Let $\tau$ be a Killing $3$-form on a compact Riemannian manifold $(M^n,g)$ with negative sectional curvature. 

$(i)$ If $n$ is odd, then either $n=3$ and $\tau$ is parallel, or $n\ne 3$ and $\tau=0$. 

$(ii)$ If $n$ is even, then $\tau=0$.
\end{ath}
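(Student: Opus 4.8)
The strategy is to combine the structural results already established: Lemma \ref{lt} says that at every point $p\in M$ the $3$-form $\tau_p$ is either zero or a generalized vector cross product on $(\T_pM, g_p)$; Theorem \ref{th} then severely restricts the possible dimensions $n$, and in the only surviving even case $n=6$ it pins down $\tau$ pointwise up to $\O(6)$; and Proposition \ref{3k} identifies the global geometry that such a $\tau$ must induce. Combining this last piece with the obstruction coming from negative curvature (Proposition \ref{pnk}, which forces positive scalar curvature in the strict nearly Kähler case, and the already-used fact that Killing forms on negatively curved compact manifolds impose no parallel structures of positive Ricci type) will close every case except $n=3$, where $\tau$ is a multiple of the volume form and hence parallel.

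First I would dispose of the case $\tau\equiv 0$, which is trivial, so assume $\tau\not\equiv 0$; by Lemma \ref{lt}, $\tau$ has constant norm, so $\tau_p\ne 0$ for every $p$ and $\tau_p$ is a generalized vector cross product at every point. For part $(i)$, when $n$ is odd, Theorem \ref{th}(i) forces $n=3$ or $n=7$. If $n=7$, then at each point $\tau_p$ lies in the $\O(7)$-orbit of $\tau_0$, i.e. $(M,g)$ carries a $\G_2$-structure whose defining $3$-form is Killing; but a Killing $3$-form defining a $\G_2$-structure is known to be parallel (this is exactly the $\G_2$ case of the non-existence results quoted in the introduction, \cite{uwe2}), and a compact manifold with holonomy contained in $\G_2$ is Ricci-flat, contradicting negative sectional curvature. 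Hence $n=3$; there $\Lambda^3\T_pM$ is one-dimensional and $\tau$ is a constant multiple of the volume form (using constant norm), which is parallel. For part $(ii)$, Theorem \ref{th}(iii) rules out $n=4k$ entirely, and Theorem \ref{th}(ii) forces $n=6$ with $\tau_p$ in the $\O(6)$-orbit of $\sigma_0$ at every point; that is precisely the statement that $(M,g)$ admits an $\SU(3)$-structure with $\Psi^-=\tau$ a Killing $3$-form.

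Now I would invoke Proposition \ref{3k}: such a manifold is either non-compact Calabi–Yau with $\Psi^\pm$ parallel, or strict nearly Kähler. The first alternative is excluded since $M$ is compact (and in any case a manifold with parallel $\Psi^+$ has reduced holonomy $\SU(3)$, hence is Ricci-flat, contradicting negative curvature). The second alternative is excluded by Proposition \ref{pnk}: a strict $6$-dimensional nearly Kähler manifold is Einstein with \emph{positive} scalar curvature, which is incompatible with negative sectional curvature. Therefore no nonzero Killing $3$-form exists when $n$ is even, proving $(ii)$.

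**Main obstacle.** The conceptual content is entirely carried by Theorem \ref{th}, Proposition \ref{3k}, and Lemma \ref{lt}, all of which are available; the only genuine work left is the $n=7$ sub-case, where I must cite (or briefly reprove) that a Killing $3$-form defining a $\G_2$-structure is parallel and that this contradicts negative curvature. The cleanest route is to note that at every point the stabilizer of $\tau_p$ is $\G_2\subset\SO(7)$, so $\tau$ defines a $\G_2$-structure; then either quote \cite{uwe2} directly, or observe that the squared-norm arguments of Lemma \ref{lt}, applied to the full algebra of $\G_2$-invariant symmetric tensors built from $\tau$, together with the negative-curvature vanishing theorem for trace-free Killing tensors, force $\nabla\tau=0$, whence $M$ is $\G_2$-holonomy and Ricci-flat — the contradiction. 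I would prefer the citation, keeping the argument short.
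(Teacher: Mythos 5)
Your overall architecture (Lemma \ref{lt} $\to$ Theorem \ref{th} $\to$ Propositions \ref{3k} and \ref{pnk}) is exactly the paper's, and parts $(ii)$ and the $n=3$ sub-case are fine. The one genuine flaw is your treatment of $n=7$: you assert that ``a Killing $3$-form defining a $\G_2$-structure is known to be parallel'' and cite \cite{uwe2}. That reference concerns Killing forms on manifolds whose \emph{holonomy} is already contained in $\G_2$, i.e.\ whose $\G_2$-structure is parallel to begin with; it does not apply to an arbitrary $\G_2$-structure whose defining $3$-form happens to be Killing. Indeed the claim is false as stated: for a nearly parallel $\G_2$-structure (e.g.\ the round $S^7$) one has $\nabla_X\tau=\lambda\, X\lr *\tau$, which is totally skew-symmetric, so $\tau$ is Killing but not parallel. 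The correct statement, which the paper uses, is that the Killing condition on the defining $3$-form is \emph{equivalent} to the $\G_2$-structure being nearly parallel, and nearly parallel $\G_2$-manifolds are Einstein with \emph{positive} scalar curvature (\cite{bfgk}, \cite{fkms}); that is what contradicts negative sectional curvature, not Ricci-flatness. Your fallback sketch (running the trace-power argument of Lemma \ref{lt} over all $\G_2$-invariant symmetric tensors to force $\nabla\tau=0$) would not work either, for the same reason: those invariants cannot distinguish a nearly parallel structure from a parallel one, and $\nabla\tau\ne 0$ in the nearly parallel case.

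Two smaller points. In the $n=7$ case you also need to know that the pointwise scale $\lambda_p$ is a global constant before speaking of ``the'' $\G_2$-structure; this is exactly what the constant-norm statement of Lemma \ref{lt} provides, so say so explicitly. In part $(ii)$, passing from ``$\tau_p$ is conjugate to $\lambda\sigma_0$ at each point'' to ``$\tau=\lambda\Psi^-$ for a global $\SU(3)$-structure $(\omega,\Psi^\pm)$'' uses that the stabilizer of $\sigma_0$ in $\O(6)$ is $\SU(3)$ together with a rotation in the plane spanned by $\Psi^+$ and $\Psi^-$; the paper records this normalization and you should too before invoking Proposition \ref{3k}.
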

\begin{proof} (i) Assume that $\tau$ is not identically 0. Lemma \ref{lt} and Theorem \ref{th} (i) show that after rescaling, $\tau$ defines a vector cross product on $\RM^n$, so by Proposition \ref{vcp}, either $n=3$, or $n=7$.

In the first case, $\dd\tau=0$ by dimensional reasons, so by \eqref{kf} we obtain that $\tau$ is parallel.

In the second case, Proposition \ref{vcp} shows that for every $p\in M$, there exists an isometry $u:\RM^7\to(\T_pM,g_p)$ and a real number $\lambda_p\in\RM$ such that $u^*\tau_p=\lambda_p\tau_0$, where $\tau_0$ is the $3$-form defined in \eqref{tau0}. Moreover, the norm of $\tau$ is constant on $M$ by \eqref{cn}, so $\lambda_p=\lambda$ is independent of $p$. We thus either have $\tau=0$, or $\frac1\lambda\tau$ is the canonical $3$-form of the $\G_2$-structure on $(M,g)$ defined by $\tau_0$. The fact that $\tau$ is a Killing $3$-form is equivalent to the fact that the $\G_2$-structure is nearly parallel
(cf. \cite{uwe}). It is well known that $(M,g)$ is then Einstein with positive scalar curvature (cf. \cite{bfgk}, \cite{fkms}), contradicting the hypothesis that $(M,g)$ has negative sectional curvature.

(ii) Assume that $n$ is even and $\tau$ is not identically $0$. By Lemma \ref{lt}, $\tau$ has constant norm and $\tau_p$ defines a generalized vector cross product on $(T_p M, g_p)$ for every $p\in M$. From Theorem \ref{th} (ii) and (iii) we necessarily have $n=6$ and there exists a non-zero constant $\lambda$ such that $\tau_p$ is conjugate to the $3$-form $\lambda\sigma_0$ defined in \eqref{sigma} for every $p\in M$. Since the stabilizer of $\sigma_0$ in $\O(6)$ is $\SU(3)$, this defines a $\SU(3)$-structure $(\omega,\Psi^\pm)$ on $M$, and after possibly doing a rotation in the space generated by $\Psi^+$ and $\Psi^-$ one can assume that $\tau=\lambda\Psi^-$. 

By Proposition \ref{3k} we either have that $(M,g,\omega)$ is Kähler and $\Psi^\pm$ are parallel, in which case it is well known that $(M,g)$ is Ricci-flat, or $(M,g,\omega)$ is strict nearly Kähler, and has positive scalar curvature by Proposition \ref{pnk}. In both cases we get a contradiction with the assumption that $(M,g)$ has negative sectional curvature, thus finishing the proof.

\end{proof}


\end{document}